\tikzset{auto}
\tikzset{empty/.style={circle,inner sep=0pt,minimum size=6mm}}
\tikzset{emptyvt/.style={circle,inner sep=0pt,minimum size=0mm}}
\tikzset{grow'=down}
\tikzset{plain/.style={circle,draw,very thick,
inner sep=0pt,minimum size=6mm}}
\tikzset{fatplain/.style={rounded rectangle,draw,very thick,minimum size=6mm}}
\tikzset{bigplain/.style={rounded rectangle,draw,very thick,minimum size=.8cm}}
\tikzset{yellowvt/.style={circle,draw,fill=yellow,very thick,inner sep=0pt,minimum size=6mm}}
\tikzset{bluevt/.style={circle,draw,fill=blue!20,very thick,inner sep=0pt,minimum size=6mm}}
\tikzset{greenvt/.style={circle,draw,fill=green!30,very thick,inner sep=0pt,minimum size=6mm}}
\tikzset{redvt/.style={circle,draw,fill=red!30,very thick,inner sep=0pt,minimum size=6mm}}
\tikzset{arrow/.style={->,thick}}
\tikzset{dashedarrow/.style={->,dashed,thick}}
\tikzset{dottedarrow/.style={->,dotted,thick}}
\tikzset{mapto/.style={|->,thick}}
\tikzset{implies/.style={thick,double,double equal sign distance,-implies}}
\tikzset{line/.style={thick}}
\tikzset{dottedline/.style={dotted,thick}}
\tikzset{dashedline/.style={dashed,thick}}
\tikzset{inputleg/.style={<- thick}}
\tikzset{outputleg/.style={->,thick}}
\tikzset{dottedinput/.style={<-,dotted,thick}}
\tikzset{
  LabelStyle/.style = { rectangle, rounded corners, draw,
                        minimum width = 2em, fill = yellow!50,
                        text = red, font = \bfseries },
  VertexStyle/.append style = { inner sep=5pt,
                                font = \Large\bfseries},
  EdgeStyle/.append style = {->, bend left} }
\DeclareFontFamily{U}{wncy}{}
\DeclareFontShape{U}{wncy}{m}{n}{<->wncyr10}{}
\DeclareSymbolFont{mcy}{U}{wncy}{m}{n}
\DeclareTextSymbol{\shaa}{mcy}{58}
\DeclareMathSymbol{\Sha}{\mathord}{mcy}{"58}
\DeclareMathOperator{\Aut}{Aut}
\DeclareMathOperator{\col}{col}
\DeclareMathOperator{\Coll}{Col}
\DeclareMathOperator*{\colim}{colim}
\DeclareMathOperator{\Ed}{Edge}
\newcommand{\Graph}{\mathsf{Graph}}
\DeclareMathOperator{\inn}{in}
\DeclareMathOperator{\Iso}{Iso}
\DeclareMathOperator{\map}{map}
\DeclareMathOperator{\Ob}{Ob}
\DeclareMathOperator{\op}{op}
\newcommand{\Operad}{\mathsf{Operad}}
\DeclareMathOperator{\out}{out}
\DeclareMathOperator{\Properad}{Properad}
\DeclareMathOperator{\Sc}{Sc}
\DeclareMathOperator{\Set}{Set}
\DeclareMathOperator{\sSet}{sSet}
\DeclareMathOperator{\Vt}{Vt}
\newcommand{\boundary}{\partial}
\newcommand{\cCc}{\mathfrak{C}}
\newcommand{\Mm}{\mathcal{M}}
\newcommand{\nN}{\mathbb{N}}
\newcommand{\rR}{\mathbb{R}}
\newcommand{\ul}[1]{\underline{#1}}
\newcommand{\im}{\mathrm{im}}
\newcommand{\id}{\mathrm{id}}
\spnewtheorem{nondefinition}[theorem]{Bad Definition}{\bfseries}{\rmfamily}
\title*{Lecture notes on infinity-properads}
\author{Philip Hackney and Marcy Robertson}
\institute{Philip Hackney 
\at Universit\"at Osnabr\"uck, \email{phackney@uos.de} 
\and
Marcy Robertson 
\at The University of Melbourne, \email{marcy.robertson@unimelb.edu.au}}
\begin{document}

\abstract*{These are notes for three lectures on higher properads given at a program at the mathematical institute MATRIX in Australia in June 2016. 
The first lecture covers the case of operads, and provides a brief introduction to the Moerdijk-Weiss theory of dendroidal sets. 
The second lecture extends the discussion to properads and our work with Donald Yau on graphical sets. 
These two lectures conclude with models for higher (pr)operads given by an inner horn filling condition. 
Finally, in the last lecture, we explore some properties of the graphical category and use them to propose a Segal-type model for higher properads.}

\maketitle

The main goal of this lecture series is to provide a brief introduction to the theory of higher operads and properads.
As these informal lecture notes stay very close to our presentations, which occupied only three hours in total, we were necessarily extremely selective in what is included.
It is important to reiterate that this is \emph{not} a survey paper on this area, and the reader will necessarily have to use other sources to get a `big picture' overview.

Various models of infinity-operads have been developed in work of C.\,Barwick,  
D.-C.\,Cisinski, J.\,Lurie, I.\,Moerdijk, I.\,Weiss and others~\cite{1302.5756, cm-ho,cm-ds, cm-simpop, higheralgebra, mw,mw2}. In these lectures we focus on the combinatorial models which arise when one extends the simplicial category $\varDelta$ by a category of trees $\varOmega$. 
This `dendroidal category' leads immediately to the category of dendroidal sets \cite{mw}, namely the presheaf category $\Set^{\varOmega^{op}}$.
A dendroidal set $X\in\Set^{\varOmega^{op}}$ which satisfies an inner horn-filling condition is called a quasi-operad (see Definition \ref{quasi-operad definition}).
We briefly review these objects in section \ref{Lecture1}.
 
Properads are a generalization of operads introduced by B.\,Vallette~\cite{vallette} which parametrize algebraic structures with several inputs and several outputs. These types of algebraic structures include Hopf algebras, Frobenius algebras and Lie bialgebras.  In our monograph \cite{hrybook} with D.\,Yau and in subsequent papers, we work to generalize the theory of infinity-operads to the properad setting. In section \ref{Lecture2} we explain the appropriate replacement of the dendroidal category $\varOmega$ the graphical category $\varGamma$ and define quasi-properads as graphical sets which satisfy an inner horn-filling condition.
This material (and much more) can be found in the monograph \cite{hrybook}.
It is worth mentioning that J.\,Kock, while reading the  manuscript of \cite{hrybook}, realized that one can give an alternative definition of the category $\varGamma$.
The interested reader can find more details of this construction in \cite{kock}. 

In the final section, we propose a Segal-type model for infinity properads.
There are clear antecedents for models of this form in several other settings \cite{9807049,threemodels,cm-ds,bh1}.
We recall the C.\,Berger and I.\,Moerdijk theory of generalized Reedy categories from \cite{bm_reedy}.
The graphical category $\varGamma$ is such a category, so the category of graphical spaces $\sSet^{\varGamma^{op}}$ possesses a cofibrantly generated model structure with levelwise weak equivalences and relatively few fibrant objects.
Finally, we discuss the Segal condition in the context of graphical sets and spaces.

\subsection*{Acknowledgments} These lectures were given in the inaugural workshop at the mathematical research institute MATRIX in Australia called ``Higher Structures in Geometry and Physics'' in June 2016; needless to say, these notes would not exist had MATRIX not supported us and allowed us to host the program in the first place. 
We would like to thank all the participants of the workshop for asking interesting questions and forcing us to refine these ideas, and also to Jon Beardsley, Julie Bergner, and Joachim Kock for offering feedback on earlier drafts of these notes.  
A special thank you goes to Gabriel C.\,Drummond-Cole who generously shared his live\TeX{ed} notes which formed the backbone of this document. We are also grateful to the Hausdorff Research Institute for Mathematics and the Max Planck Institute for Mathematics for their hospitality while we were finishing the writing and editing of these notes.

\section{Colored operads, dendroidal sets, and quasi-operads}\label{Lecture1}

This section is a brief overview of dendroidal sets,  introduced by Moerdijk and Weiss~\cite{mw}, which allow us to discuss the `quasi-operad' model for infinity categories \cite{mw,cm-ho}. Throughout this section, we are using the formal language that we will need to extend to the more subtle case of properads. For those who are unfamiliar with dendroidal sets we recommend the original paper \cite{mw} and the lecture notes by Moerdijk~\cite{moerdijklecture} as references. 

\begin{definition}\label{def_graph}
A \emph{graph} is a connected, directed graph $G$ which admits legs and does not admit directed cycles. 
A \emph{leg} is an edge attached to a vertex at only one end.
 We also want our graphs to have an ordering given by bijections 
 $$\textrm{ord}^{in}_{G}:\{1,...,m\}\longrightarrow \inn(G)$$
 $$\textrm{ord}^{out}_{G}:\{1,...,n\}\longrightarrow \out(G)$$ as well as bijections 
$$\textrm{ord}^{in}_{v}:\{1,...,k\}\longrightarrow \inn(v)$$ and 
$$\textrm{ord}^{out}_{v}:\{1,...,j\}\longrightarrow \out(v)$$ for each $v$ in $\Vt(G)$.

If we say that $G$ is a $\cCc$-colored graph then we are including the extra data of an edge coloring function $\eta:\Ed(G)\longrightarrow \cCc.$ 

\end{definition} 
When we draw pictures of graphs, we will omit the arrows, and always assume the direction in the direction of gravity. 

\begin{definition}
A \emph{tree} is a simply connected graph with a unique output (the root).
\end{definition}

For any vertex $v$ in a $\cCc$-colored tree $T$ $\inn(v)$ is written as a list $\ul{c} = c_1,\ldots, c_k$ of colors $c_i\in \cCc$. A list of colors like $\ul{c}$ is called a \emph{profile} of the vertex $v$. Similarly,  $\out(v)=d$ identifies the element $d\in\cCc$ which colors the output of the vertex $v$. The complete input-output data of a vertex $v$ is given by the \emph{biprofile} $(\ul{c};d)$. 

\begin{example} 
In the following picture the tree has legs labeled $3,4,5, 6$ and $0$. The leg $0$ is the single output of this graph.  Internal edges are labeled $1,2$ and $7$. The edges at each vertex all come equipped with an ordering, and if we wish to list the inputs to the vertex $v$ we would write $\inn(v) =(1,2)$. 

\[
\begin{tikzpicture} [scale=.7]
\node[empty](k) at (-1,2){};
\node[empty](l) at (0,2){};
\node[empty](j) at (1,2){}; 
\node[empty](p) at (2,2){}; 
\node[shape=circle,draw=black] (q) at (4,2){q}; 
\node[shape=circle,draw=black] (w) at (0,0) {w};
\node[shape=circle, draw=black](v) at (1,-2) {v};
\node[shape=circle, draw=black](y) at (3,0) {y};
\node[empty](m) at (1,-4){};

\draw[-](y) to node [midway, fill=white] {2} (v);
\draw[-] (w) to node [midway, fill=white] {1} (v);
\draw[-] (k) to node [above, fill=white] {3} (w);
\draw[-] (l) to node [above, fill=white] {4} (w);
\draw[-] (j) to node [above, fill=white] {5} (w);
\draw[-](v) to node [midway, fill=white] {0} (m);
\draw[-](p) to node [above, fill=white] {6}(y);
\draw[-](q) to node [midway, fill=white] {7} (y);

\end{tikzpicture}
\]
If we wanted to consider $T$ as a $\cCc$-colored tree, we would add the data of a coloring function $\Ed(T)\rightarrow \cCc$ which would result in our picture looking like 
\[
\begin{tikzpicture} [scale=.7]
\node[empty](k) at (-1,2){};
\node[empty](l) at (0,2){};
\node[empty](j) at (1,2){}; 
\node[empty](p) at (2,2){}; 
\node[shape=circle,draw=black] (q) at (4,2){q}; 
\node[shape=circle,draw=black] (w) at (0,0) {w};
\node[shape=circle, draw=black](v) at (1,-2) {v};
\node[shape=circle, draw=black](y) at (3,0) {y};
\node[empty](m) at (1,-4){};

\draw[-](y) to node [midway, fill=white] {$c_2$} (v);
\draw[-] (w) to node [midway, fill=white] {$c_1$} (v);
\draw[-] (k) to node [above, fill=white] {$c_3$} (w);
\draw[-] (l) to node [above, fill=white] {$c_4$} (w);
\draw[-] (j) to node [above, fill=white] {$c_5$} (w);
\draw[-](v) to node [midway, fill=white] {$d$} (m);
\draw[-](p) to node [above, fill=white] {$c_6$}(y);
\draw[-](q) to node [midway, fill=white] {$c_7$} (y);

\end{tikzpicture}
\]
where $d$ and each of the $c_i$ are elements of $\mathfrak C$.
The profile of $v$ is $\inn(v)=(c_1,c_2)=\ul{c}$ and the biprofile of $v$ is written as $(\ul{c};d)$, where the semi-colon differentiates between inputs and outputs. 

\end{example}

\subsection{Colored operads}

A \emph{colored operad} is a generalization of a category in which we have a set of objects (or colors) but where we allow for morphisms which have a finite list of inputs and a single output.  When we visualize these morphisms we write them as colored trees, so that the morphism  
$
\xymatrix{
(x_1,x_2) \ar[r]^-{f} & y
}$ looks like

 \[
\begin{tikzpicture} [scale=.7]
\node[empty](k) at (-1,2){};
\node[empty](j) at (1,2){}; 
\node[shape=circle,draw=black] (f) at (0,0) {f};
\node[empty](v) at (0,-2) {};
\draw[-] (f) to node [midway, fill=white] {$y$} (v);
\draw[-] (k) to node [above, fill=white] {$x_1$} (f);
\draw[-] (j) to node [above, fill=white] {$x_2$} (f);

\end{tikzpicture}
\] 
Notice that in this depiction the edges of the tree are colored by the objects (hence the name colors). 
A modern comprehensive treatment of colored operads appears in the book of D.\,Yau \cite{yauoperad}.

\begin{definition}
A \emph{colored operad} $P$ consists of the following data:
\begin{enumerate}
\item A set of colors $\cCc=\col(P)$; 
\item for all $n\geq 0$ and all biprofiles $(\ul{c};d) =(c_1,...,c_n; d)$ in $\cCc$, a set $P(\ul{c};d)$;
\item for $\sigma \in \varSigma_n$, maps
$\sigma^* \colon P(\ul{c}; d) \to P(\ul{c}\sigma; d) = P(c_{\sigma(1)}, \dots, c_{\sigma(n)}; d)
$ so that $(\sigma \tau)^* = \tau^*\sigma^*$;
\item for each $c\in\cCc$ a unit element $\id_c \in P(c;c)$;
\item associative, equivariant and unital compositions $$P(\ul{c};d)\circ_{i}P(\ul{d};c_i)\rightarrow P(c_1,...,c_{i-1},(d_1...,d_k),c_{i+1},...,c_m; d)$$ where $\ul{d}=(d_1,...,d_k)$ and $1\leq i\leq m$. 
\end{enumerate}
A morphism $f:P\rightarrow Q$ consists of: 
\begin{enumerate} 
\item a map of color sets $f:\col(P)\rightarrow \col(Q)$;
\item for all $n\ge 0$ and all biprofiles $(\ul{c};d)$, a map of sets $$f:P(\ul{c},d)\rightarrow Q(f\ul{c},fd)$$ which commutes with symmetric group actions, composition and units. 
\end{enumerate} 
The category of colored operads is denoted by $\Operad$. 
\end{definition}

Examples of colored operads include: 

\begin{itemize} 
\item The $2$-colored operad $\mathsf{O}^{[1]}$, whose algebras are morphisms of $\mathsf{O}$-algebras for a specified uncolored operad $\mathsf{O}$ \cite[1.5.3]{bmresolution}
 
\item The $\mathbb{N}$-colored operad whose algebras are all one colored operads \cite[1.5.6]{bmresolution}, \cite[\S 14.1]{yj}, \cite{yauoperad}. 
\end{itemize} 

We now focus on operads which are generated by uncolored trees.  Explicitly, given any uncolored tree $T$, one can generate a colored operad $\varOmega(T)$ so that
\begin{itemize} 
\item  the set of colors of $\varOmega(T)$ is taken to be the set of edges of $T$; 
\item the operations of $\varOmega(T)$ are freely generated by vertices in the tree.
\end{itemize} 

\begin{example} Consider the uncolored tree $T$ 
\[
\begin{tikzpicture} [scale=.7]
\node[empty](k) at (-1,2){};
\node[empty](l) at (0,2){};
\node[empty](j) at (1,2){}; 
\node[empty](p) at (2,2){}; 
\node[shape=circle,draw=black] (q) at (4,2){q}; 
\node[shape=circle,draw=black] (w) at (0,0) {w};
\node[shape=circle, draw=black](v) at (1,-2) {v};
\node[shape=circle, draw=black](y) at (3,0) {y};
\node[empty](m) at (1,-4){};

\draw[-](y) to node [midway, fill=white] {a} (v);
\draw[-] (w) to node [midway, fill=white] {b} (v);
\draw[-] (k) to node [above, fill=white] {c} (w);
\draw[-] (l) to node [above, fill=white] {d} (w);
\draw[-] (j) to node [above, fill=white] {e} (w);
\draw[-](v) to node [midway, fill=white] {r} (m);
\draw[-](p) to node [above, fill=white] {f}(y);
\draw[-](q) to node [midway, fill=white] {g} (y);

\end{tikzpicture}
\] where we have labeled the edges by letters, but do not mean there is a coloring. 
 
The associated colored operad $\varOmega(T)$  will have color set $$\cCc=\{a,b,c,d,e,f,g,r\}=\Ed(T)$$ and operations freely generated by the vertices. In this example, generating operations are $v\in\varOmega(T)(a,b;r)$, $y\in\varOmega(T)(f,g;a)$, $w\in\varOmega(T)(c,d,e;b)$ and $q\in\varOmega(T)(-;g)$. Composition of operations are given by formal graph substitutions (see Definition~\ref{def_graph_sub}) into appropriate partially grafted corollas (Definition \ref{pgc def}). To give a specific example, the operation $v\circ_{a}y\in\varOmega(T)(b,f,g;r)$ is a composition of $v$ and $y$ which we visualize as being the result of collapsing along the edge marked $a$. 

\end{example}

\begin{definition}\cite{mw} The dendroidal category $\varOmega$ is the full subcategory of $\Operad$ whose objects are colored operads of the form $\varOmega(T)$. When no confusion can arise, we often write $T$ for $\varOmega(T)$.  
\end{definition}

\begin{definition}\cite[Definition 4.1]{mw}
A \emph{dendroidal set} is a functor $X:\varOmega^{op}\to \Set$.
Collectively these form a category $\Set^{\varOmega^{op}}$ of dendroidal sets.
\end{definition}

An element of $x\in X_T$ is called a \emph{dendrex} of shape $T$. We also have the representable functors $\varOmega[T]=\varOmega(-, T).$ 

\subsection{Coface maps and graph substitution}
Quasi-operads are similar in spirit to quasi-categories. In particular, they are dendroidal (rather than simplicial) sets which satisfy an inner Kan condition.  This requires that we define coface and codegeneracy maps in $\varOmega$ which we will make precise by using formal graph substitution. 

\begin{definition}\cite[2.16]{hrybook}\label{pgc def} A \emph{partially grafted corolla} $P$ is a graph with two vertices $u$ and $v$ in which a nonempty finite list of outputs of $u$ are inputs of $v$. 

\end{definition}

\begin{example} The following graph $P$ is a partially grafted corolla. 
\[
\begin{tikzpicture}[scale=.6]
\node[empty](k) at (-1,2){};
\node[empty](l) at (0,2){};
\node[empty](j) at (1,2){}; 
\node[shape=circle,draw=black] (u) at (0,0) {u};
\node[shape=circle, draw=black](v) at (0,-2) {v};
\node[empty](o) at (1,-2){};
\node[empty](p) at (-1,-2){};
\node[empty](m) at (-1,-4){};
\node[empty](n) at (1,-4){};
\node[empty](q) at (1, 1){};

\draw [-] (k) to [bend right =10] (u);
\draw [-] (j) to [bend left=10]  (u);
\draw [-] (l) to (u);

\draw [-] (u) to [bend left= 30]  (v);
\draw[-] (u) to [bend right=30] (v);
\draw[-] (u) to  (v);

\draw [-] (v) to[bend right=10]  (m);
\draw [-] (v) to [bend left=10] (n);
\draw [-] (u) to [bend left=30] (o);
\draw [-] (u) to [bend right=30] (p);
\draw [-] (q) to [bend left=30] (v);
\end{tikzpicture}
\]

\end{example} 

Partially grafted corollas play a key role in describing operadic and properadic composition as it arises from graph substitution. Graph substitution is a formal language for saying something very intuitive, namely that in a given graph G, you can drill a little hole at any vertex and plug in a graph $H$ and assemble to get a new graph. 

\begin{definition}\label{def_graph_sub}\cite[2.4]{hrybook}
We can substitute a graph $H$ into a graph $G$ at vertex $v$ if:
\begin{enumerate}
\item there is a specified bijection $\inn(H)\xrightarrow{\cong}\inn(v)$,
\item a specified bijection $\out(H)\xrightarrow{\cong} \out(v)$, and
\item the coloring of inputs and outputs of $H$ matches the local coloring of $G$ at the vertex $v$.  
\end{enumerate}
The resulting graph is denoted as $G(H_v)$ and we say that $G(H_v)$ was obtained from $G$ via graph substitution. The subscript on $H_v$ indicates that we substituted $H$ into vertex $v$. 
If $S\subseteq \Vt(G)$, we will write $G\{H_v\}_{v\in S}$ when we perform graph substitution at several vertices simultaneously. 
\end{definition}

Graph substitution induces maps in $\varOmega$. For example consider $T$ 

\[
\begin{tikzpicture} [scale=.5]
\node[empty](k) at (-1,2){};
\node[empty](l) at (0,2){};
\node[empty](j) at (1,2){}; 
\node[empty](p) at (2,2){}; 
\node[shape=circle,draw=white] (G) at (-2,0){$T=$}; 
\node[shape=circle,draw=black] (q) at (4,2){q}; 
\node[shape=circle,draw=black] (w) at (0,0) {w};
\node[shape=circle, draw=black](x) at (1,-2) {x};
\node[shape=circle, draw=black](y) at (3,0) {y};
\node[empty](m) at (1,-4){};

\draw[-](y) to (x);
\draw[-] (w) to (x);
\draw[-] (k) to (w);
\draw[-] (l) to (w);
\draw[-] (j) to (w);
\draw[-](x) to (m);
\draw[-](p) to (y);
\draw[-](q) to (y);

\end{tikzpicture}
\]
and the partially grafted corolla $P$ 

\[
\begin{tikzpicture} [scale=.5]
\node[empty](k) at (-1,2){};
\node[empty](l) at (0,2){};
\node[empty](j) at (1,2){}; 
\node[shape=circle,draw=black] (u) at (0,0) {u};
\node[shape=circle,draw=white] (H) at (-2,0) {$P=$};
\node[shape=circle, draw=black](v) at (0,-2) {v};
\node[empty](m) at (0,-4){};

\draw[-] (u) to (v);
\draw[-] (k) to (u);
\draw[-] (l) to (u);
\draw[-] (j) to (u);
\draw[-](v) to (m);

\end{tikzpicture}. 
\]
Since the total number of inputs of $P$ matches the total number of inputs of the vertex $w\in\Vt(G)$ and the number of outputs of $P$ matches the number of outputs of $w$ we can preform graph substitution.

\[
\begin{tikzpicture} [scale=.5]
\node[empty](k) at (-1,4){};
\node[empty](l) at (0,4){};
\node[empty](j) at (1,4){}; 
\node[empty](p) at (2,2){}; 
\node[shape=circle,draw=black] (q) at (4,2){q}; 
\node[shape=circle,draw=black] (v) at (0,0) {v};
\node[shape=circle,draw=black] (u) at (0,2) {u};
\node[shape=circle,draw=white] (GP) at (-3,0) {$T(P_w)=$};
\node[shape=circle, draw=black](x) at (1,-2) {x};
\node[shape=circle, draw=black](y) at (3,0) {y};
\node[empty](m) at (1,-4){};

\draw[-](y) to (x);
\draw[-] (w) to (x);
\draw[-] (k) to (u);
\draw[-] (l) to (u);
\draw[-] (j) to (u);
\draw[-] (u) to (v);
\draw[-](x) to (m);
\draw[-](p) to (y);
\draw[-](q) to (y);

\end{tikzpicture}
\]

Graph substitution induces a map $T\rightarrow T(P_w)$ in $\varOmega$  which sends the $w$ to $u\circ v$, $x$ to $x$, $y$ to $y$, and $q$ to $q$.  This example generalizes, in that if we take any tree $S$ we can expand a vertex to create an additional internal edge by substitution of the proper partially grafted corolla. The expansion of an internal edge can be written as an internal graph substitution, and we have an induced $\varOmega$-map $d^{uv}:S\to T=S(P)$ where $P$ is the appropriate partially grafted corolla. Maps of the type $d^{uv}$ are called \emph{inner coface} maps (\cite[pg. 6]{mw}, \cite[6.1.1]{hrybook}).

Let's look at another example of graph substitution.  Consider the partially grafted corolla $P$

\[
\begin{tikzpicture} [scale=.5]
\node[empty](k) at (-1,2){};
\node[empty](l) at (0,2){};
\node[empty](j) at (1,2){}; 
\node[empty](n) at (1,0){};
\node[empty](o) at (2,-1){};
\node[shape=circle,draw=white] (P) at (-2,0) {$P=$};
\node[shape=circle,draw=black] (u) at (0,0) {u};
\node[shape=circle, draw=black](v) at (0,-2) {v};
\node[empty](m) at (0,-4){};

\draw[-] (u) to (v);
\draw[-] (k) to (u);
\draw[-] (l) to (u);
\draw[-] (j) to (u);
\draw[-](n) to (v);
\draw[-](o) to (v);
\draw[-](v) to (m);

\end{tikzpicture} 
\] and the tree $S$

\[
\begin{tikzpicture} [scale=.5]
\node[empty](p) at (2,2){}; 
\node[empty] (q) at (4,2){}; 
\node[shape=circle,draw=white] (S) at (-2,0) {$S=$};
\node[shape=circle,draw=black] (z) at (0,0) {z};
\node[empty] (u) at (0,2) {};
\node[shape=circle, draw=black](x) at (1,-2) {x};
\node[shape=circle, draw=black](y) at (3,0) {y};
\node[empty](m) at (1,-4){};

\draw[-](y) to (x);
\draw[-] (w) to (x);
\draw[-] (u) to (z);
\draw[-](x) to (m);
\draw[-](p) to (y);
\draw[-](q) to (y);

\end{tikzpicture}. 
\] 

We can substitute $S$ into the vertex $v$ in the partial grafted corolla $P$ since $S$ has the same number of inputs and outputs as $v$.  The resulting picture is the tree $P(S_v)$ 
\[
\begin{tikzpicture} [scale=.5]
\node[empty](k) at (-1,4){};
\node[empty](l) at (0,4){};
\node[empty](j) at (1,4){}; 
\node[empty](p) at (2,2){}; 
\node[empty] (q) at (4,2){}; 
\node[shape=circle,draw=black] (z) at (0,0) {z};
\node[shape=circle,draw=black] (u) at (0,2) {u};
\node[shape=circle, draw=black](x) at (1,-2) {x};
\node[shape=circle, draw=black](y) at (3,0) {y};
\node[empty](m) at (1,-4){};

\draw[-](y) to (x);
\draw[-] (w) to (x);
\draw[-] (k) to (u);
\draw[-] (l) to (u);
\draw[-] (j) to (u);
\draw[-] (u) to (z);
\draw[-](x) to (m);
\draw[-](p) to (y);
\draw[-](q) to (y);

\end{tikzpicture}
\]
and there is a natural map $d^{u}: S\rightarrow P(S_v)$ which is an inclusion of $S$ as a subtree in $P(S_v)$. For any tree $T$ we can write all subtree inclusions by (possibly iterated) substitution of the subtree into a partially grafted corolla and maybe relabeling (\cite[Definition 6.32]{hrybook}).   Maps like these which are induced by graph substitution where the partially grafted corolla is on the ``outside'' are called \emph{outer coface maps} (\cite[pg. 6]{mw}, \cite[6.1.2]{hrybook}).  The third class of maps we will concern ourselves with are called \emph{codegeneracies} and are given by the substitution of a graph with no vertices $\downarrow$ into a bivalent vertex $v$, i.e. the maps $\sigma^v:H\to H(\downarrow)$ (\cite[pg. 6]{mw}, \cite[6.1.3]{hrybook}). The cofaces and codegeneracies satisfy identities reminiscent of the simplicial identities. 

\begin{lemma}\cite[Lemma 3.1]{mw}
The category $\varOmega$ is generated by the inner and outer coface maps, codegeneracies and isomorphisms.
\end{lemma}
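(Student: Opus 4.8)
The plan is to show that an arbitrary morphism $f\colon\varOmega(S)\to\varOmega(T)$ can be written as a finite composite of the four distinguished types of maps. Since $\varOmega(S)$ is the free colored operad on the tree $S$, the map $f$ is completely determined by a function on colors $\Ed(S)\to\Ed(T)$ together with, for each vertex $v$ of $S$, an operation of $\varOmega(T)$ whose biprofile matches the $f$-images of the edges adjacent to $v$. By freeness of $\varOmega(T)$, every such operation is represented (up to the symmetric-group action) by a subtree of $T$; this is the combinatorial description I would exploit throughout.

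First I would split off the degenerate part. A vertex $v$ of $S$ can be sent by $f$ to a unit $\id_c$ only if $v$ is bivalent, in which case its two adjacent edges acquire the same image. For each such vertex I can factor out the codegeneracy $\sigma^v\colon\varOmega(S)\to\varOmega(S(\downarrow_v))$ that collapses the bivalent vertex, using that $\sigma^v$ is the universal way of making $v$ a unit. Iterating over all vertices that $f$ sends to units expresses $f$ as $g\circ\sigma$, where $\sigma$ is a composite of codegeneracies and $g\colon\varOmega(S')\to\varOmega(T)$ sends no vertex to a unit; I would then check that such a $g$ is a monomorphism, i.e.\ injective on colors and on generating operations.

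The heart of the argument is to show that this nondegenerate $g$ is a composite of inner cofaces, outer cofaces and isomorphisms, and I would argue by induction on $\lvert\Vt(T)\rvert-\lvert\Vt(S')\rvert$. When this number is $0$, a nondegenerate $g$ must be a bijection on edges carrying vertices to vertices, hence an isomorphism, giving the base case. For the inductive step I would exhibit $T$ as being obtained from a tree $T'$ with one fewer vertex by substituting an appropriate partially grafted corolla $P$, so that $g$ factors as $\varOmega(S')\xrightarrow{g'}\varOmega(T')\xrightarrow{d}\varOmega(T)$ with $d$ a single coface. Concretely, the image of $g$ determines a proper subtree of $T$, so $T$ contains either an internal edge not hit by $g$, whose contraction is an inner coface, or a vertex at the boundary of the image, whose omission is an outer coface realized by substitution of the subtree into a partially grafted corolla exactly as in the examples above. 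Either choice strictly decreases the vertex difference, and the induction closes.

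The main obstacle I anticipate is this inductive step: showing that a genuine coface can always be peeled off and that the residual map $g'$ remains nondegenerate. This requires a careful analysis of how the chosen internal edge or boundary vertex interacts with the orderings $\textrm{ord}^{in}$ and $\textrm{ord}^{out}$ at each vertex, and it is precisely the discrepancy between these orderings and the ones inherited from the subtree that forces the isomorphisms into the generating set. Managing this relabeling data, rather than establishing the bare existence of the factorization, is where the real work lies.
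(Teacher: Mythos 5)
Your argument is correct, and it is worth noting at the outset that the paper never actually proves this lemma: it cites \cite{mw}, and its closest in-house argument is the existence sketch for the analogous factorization in $\varGamma$ (every $f$ factors as $g\circ h$ with $h\in\varGamma^{-}$, $g\in\varGamma^{+}$). Your first step --- collapsing by codegeneracies the bivalent vertices sent to units, then reducing to a map injective on colors --- is exactly the paper's construction of $G\to G_1\to G_2$. Where you genuinely diverge is in handling the injective part: the paper builds the factorization from the source side, passing through the image ($G_2\{f_1(u)\}$ sits inside $K$, so $G_2\to\im(f)$ is a composite of inner cofaces blowing up vertices, and $\im(f)\to K$ is a subgraph inclusion, hence a composite of outer cofaces), whereas you peel cofaces off the target by induction on $|\Vt(T)|-|\Vt(S')|$. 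The paper's organization buys the canonical negative-then-positive form needed for the generalized Reedy axioms, together with uniqueness up to isomorphism; your induction is more elementary and self-contained, and it makes explicit the decomposition of the injective part into cofaces, which the paper defers to a cited lemma (\cite[6.65]{hrybook} in the properad case). One imprecision you should repair: when some vertex of $S'$ is sent to a subtree of $T$ with more than one vertex, the image of $g$ can be \emph{all} of $T$, so it is not true that ``the image of $g$ determines a proper subtree.'' The dichotomy that works is: either some internal edge of $T$ fails to lie in the image of the edge map $g_0$ (this includes edges internal to the subtrees $g_1(v)$), in which case contracting it is an inner coface through which $g$ factors; or every internal edge of $T$ is hit, in which case each $g_1(v)$ is a single corolla, $g$ is a proper subtree inclusion, and connectedness of the image guarantees that either a top vertex or the root vertex (necessarily with a unique internal input edge) lies outside it and can be removed by an outer coface.
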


In other words, any every map in $\varOmega$ can be factored as a composition of inner and outer coface maps, codegeneracies and isomorphisms. These factorizations will be more carefully discussed in \S\ref{Lecture3}.

\subsection{Boundaries and horns}

Now that we have defined inner and outer coface maps, we can describe faces and boundaries of dendroidal sets. 

\begin{definition} \cite[pg 16]{mw}  Let $\alpha: T \rightarrow S$ be an (inner or outer) coface map in $\varOmega$. Then the $\alpha$-face of $\varOmega[T]$ is the image of the induced map $\alpha^*:\varOmega[S]\to \varOmega[T]$. We will write $\boundary_\alpha[T]$ for the $\alpha$-face of $\varOmega[T]$. 
\end{definition}

\begin{definition}
The \emph{boundary} of $\varOmega[T]$ is the union over all the faces $\boundary[T]=\bigcup_\alpha \boundary_\alpha[T]$. If we omit the $\beta$-face, we have the $\beta$-horn $\varLambda^\beta[T]=\bigcup_{\alpha\ne \beta}\boundary_\alpha[T]$. If, moreover, $\beta$ is the image of an inner coface map then $ \varLambda^\beta[T]$ is called an \emph{inner horn}.
\end{definition}

A quasi-operad is now defined as a dendroidal set satisfying an inner Kan lifting property. 
\begin{definition}\cite[pg 352]{mw2}\label{quasi-operad definition}
A dendroidal set $X$ is a \emph{quasi-operad}  if for every diagram given by the solid arrows admits a lift 
\[
\xymatrix{
	\varLambda^\beta[T]\ar[r]\ar[d]&X\\
	\varOmega[T]\ar@{.>}[ur]
}
\]  where $T$ ranges over all trees and $\beta$ ranges over all inner coface maps. 
\end{definition}

\begin{definition}\cite[Proposition 1.5]{cm-ho}
A monomorphism of dendroidal sets $X\rightarrow Y$ is said to be \emph{normal} if and
only if for any tree $T$, the action of $\Aut(T)$ on $Y_T\setminus X_T$ is free.
\end{definition}

In analogy to the Joyal model structure on $\sSet$ for quasi-categories (see \cite{juliesurvey} for references), we have the following.

\begin{theorem}\label{infty_operad}\cite[Theorem 2.4]{cm-ho}
There is a model category structure on $\Set^{\varOmega^{op}}$ such that the quasi-operads are the fibrant objects and the normal monomorphisms are the cofibrations.
\end{theorem}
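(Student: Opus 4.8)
The plan is to obtain this model structure as an instance of Cisinski's general machinery for presheaf categories, following \cite{cm-ho}, rather than by verifying Quillen's axioms directly. The first task is to establish that the normal monomorphisms form an admissible class of cofibrations. Concretely, I would show that they are precisely the weakly saturated class (closed under pushout, transfinite composition, and retract) generated by the set of boundary inclusions $\partial\varOmega[T]\hookrightarrow\varOmega[T]$ as $T$ ranges over $\varOmega$. The freeness condition on $\Aut(T)$ in the definition of normal monomorphism is exactly what makes this generation statement hold: a dendroidal set $X$ is normal iff the map from the initial object is a normal monomorphism, and the skeletal filtration of such an $X$ attaches nondegenerate dendrices along boundary inclusions, with the free automorphism action ensuring that each attaching step is a genuine pushout of a coproduct of boundary inclusions. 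This furnishes the cellular model required by Cisinski's theory.

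Next I would introduce an exact cylinder. Using the Boardman--Vogt tensor product of dendroidal sets together with a chosen interval $J$ (the dendroidal nerve of the contractible groupoid on two objects), the functor $X\mapsto J\otimes X$, equipped with its two endpoint inclusions and the projection, provides a cylinder, and I would check that it is exact, i.e.\ compatible with the class of normal monomorphisms. I would then define the \emph{inner anodyne extensions} to be the weakly saturated class generated by the inner horn inclusions $\varLambda^\beta[T]\hookrightarrow\varOmega[T]$, and verify the pushout--product axioms relating this class to the cylinder and to the cofibrations; in particular, for a normal monomorphism $i$ and an endpoint inclusion $\{e\}\hookrightarrow J$, the pushout--product $i\mathbin{\square}(\{e\}\hookrightarrow J)$ must be shown to be inner anodyne.

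With these inputs in place, Cisinski's theorem produces a cofibrantly generated model structure on $\Set^{\varOmega^{op}}$ in which the cofibrations are the normal monomorphisms and the fibrant objects are exactly those dendroidal sets having the right lifting property against every inner anodyne extension. Because the inner anodyne class is by definition generated by the inner horn inclusions, the small object argument shows that a dendroidal set lifts against all inner anodyne extensions iff it lifts against the generating inner horns, that is, iff it is a quasi-operad in the sense of Definition \ref{quasi-operad definition}. This identifies the fibrant objects with the quasi-operads and completes the argument.

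The principal obstacle is the verification of the pushout--product/compatibility axioms for the inner anodyne extensions. Unlike the simplicial situation, the tensor product $\varOmega[S]\otimes\varOmega[T]$ is not representable, so one cannot simply read off the required factorizations from a product of simplices. Instead one must decompose the tensor product via shuffles of the two trees and construct an explicit filtration whose successive stages are pushouts of inner horn inclusions. Organizing this filtration --- keeping track of which dendrices are added at each stage, and confirming that every attachment is along an inner rather than an outer horn --- is the genuinely technical heart of the proof and the step I expect to require the most care.
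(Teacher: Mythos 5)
Your proposal correctly reconstructs the scaffolding of the actual proof in \cite{cm-ho} (the notes themselves give no proof, only the citation): the normal monomorphisms are the saturation of the boundary inclusions $\partial\varOmega[T]\hookrightarrow\varOmega[T]$, the interval $J$ gives an exact cylinder, and shuffle filtrations of tensor products are the technical engine. However, the step you rely on to identify the fibrant objects is false: the pushout--product of a normal monomorphism with an endpoint inclusion $\{e\}\hookrightarrow J$ is \emph{not} in general inner anodyne. Every inner horn inclusion $\varLambda^{\beta}[T]\hookrightarrow\varOmega[T]$ is bijective on dendrices of shape $\downarrow$ (i.e.\ on colors), and bijectivity on colors is preserved under pushouts, transfinite composition and retracts, so every inner anodyne map is bijective on colors. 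Now take the normal monomorphism $\emptyset=\partial\varOmega[\,\downarrow\,]\hookrightarrow\varOmega[\,\downarrow\,]$: since $\varOmega[\,\downarrow\,]$ is the unit for the tensor product, its pushout--product with $\{e\}\hookrightarrow J$ is $\{e\}\hookrightarrow J$ itself, which maps one color to two and hence cannot be inner anodyne. So the compatibility you propose to verify simply fails.

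This is not a bookkeeping slip; it is precisely why the theorem is hard. Since the inner anodynes alone are not compatible with the $J$-cylinder, Cisinski's machinery must be run with the larger anodyne class obtained by closing the inner horns under pushout--product with $\{e\}\hookrightarrow J$ (and $\partial J\hookrightarrow J$), and the resulting fibrant objects are a priori the dendroidal sets with the right lifting property against this \emph{larger} class, i.e.\ quasi-operads satisfying an additional $J$-lifting condition. The identification of the fibrant objects with all quasi-operads is then a substantive theorem --- the dendroidal analogue of Joyal's theorem that quasi-categories are exactly the fibrant objects of the Joyal model structure. Its proof requires developing the theory of equivalences in a quasi-operad and showing that for any inner Kan dendroidal set $X$ the evaluation map $X^{J}\to X$ is a trivial fibration (equivalently, that equivalences admit the relevant special outer horn fillings); this occupies a large part of \cite{cm-ho} and is entirely absent from your outline. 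The shuffle argument you flag as the ``technical heart'' is indeed needed (to prove that the pushout--product of a normal monomorphism with an \emph{inner anodyne} map is inner anodyne), but it is the formal half of the proof, not the half that makes the statement about fibrant objects true.
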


\section{Colored properads, graphical sets, and quasi-properads}\label{Lecture2}

In the previous section we gave a very quick introduction to the dendroidal category using some of the formal language of graph substitution. We will now extend this language to a larger class of graphs to describe properads.  

\emph{Isomorphisms} between graphs preserve all the structure (including orderings) and \emph{weak isomorphisms} between graphs preserve all the structure except the ordering. We denote the category of graphs up to strict isomorphism as $\mathsf{Graph}$. The category $\Graph(m,n)$ is a subcategory of $\Graph$ whose objects are graphs $G$ where $\left|\inn(G)\right|=m$ and $\left|\out(G)\right|=n$. The category $\Graph(\ul{c},\ul{d})$ similarly consists of all $\cCc$-colored graphs with $\inn(G)=\ul{c}=(c_1,..,c_m)$ and $\out(G)=\ul{d}=(d_1,...,d_m)$.

\subsection{Properads} 
Like an operad, a colored properad is a generalization of a category. We have a set of objects, called colors, and now we allow our morphisms to have finite lists of inputs and finite lists of outputs. When we write down a visual representation of a morphism 
$\xymatrix{
(x_1,x_2) \ar[r]^-{f} & (y_1,y_2,y_3)
}$ in a properad we usually write a colored graph 

 \[
\begin{tikzpicture} [scale=.7]
\node[empty](k) at (-2,2){};
\node[empty](j) at (2,2){}; 
\node[shape=circle,draw=black] (f) at (0,0) {f};
\node[empty](v) at (-2,-3) {};
\node[empty](w) at (0,-3) {};
\node[empty](z) at (2,-3) {};
\draw[-] (f) to node [below, fill=white] {$y_1$} (v);
\draw[-] (f) to node [below, fill=white] {$y_2$} (w);
\draw[-] (f) to node [below, fill=white] {$y_3$} (z);
\draw[-] (k) to node [above, fill=white] {$x_1$} (f);
\draw[-] (j) to node [above, fill=white] {$x_2$} (f);

\end{tikzpicture}
\] but it really could be any graph with $2$ inputs and $3$ outputs that is colored by the objects of the properad $P$. In other words, a morphism $\xymatrix{
(x_1,x_2) \ar[r]^-{g} & (y_1,y_2,y_3)
}$ in $P$ is a graph $g\in\Graph(x_1,x_2;y_1,y_2,y_3)$. Composition of morphisms follows the same basic principle of operad composition. In an operad you think of the $\circ_{i}$ composition as plugging the root of a tree into the $i^{th}$ leaf of another tree. For properads we want to be able to take any sub-list of outputs of a graph and glue them to appropriately matched sub-list of inputs in another graph.

\begin{definition}\cite[Definition 3.5]{hrybook}
An $\cCc$-colored properad $P$ consists of 
\begin{itemize} 
\item a set $\cCc=\col(P)$ of colors;
\item for each biprofile $(\ul{c};\ul{d}) = (c_1,...,c_m;d_1,...,d_n)$, a set $P(\ul{c};\ul{d})$;
\item for $\sigma \in \varSigma_m$ and $\tau \in \varSigma_n$, maps
\[
 	P(\ul{c};\ul{d}) \to P(\ul{c}\sigma;\tau\ul{d}) = P(c_{\sigma(1)},...,c_{\sigma(m)};d_{\tau^{-1}(1)},...,d_{\tau^{-1}(n)})
\] 
which assemble into a $\varSigma_m^{op} \times \varSigma^n$ action on the collection $\coprod_{|\ul{c}| = m, |\ul{d}|=n} P(\ul{c}; \ul{d})$;
\item for all $c\in\cCc$, a unit $\textrm{id}_{c}\in P(c;c)$;
\item an associative, until and equivariant composition $$\boxtimes_{b'}^{c'}: P(\ul{c};\ul{d}) \otimes P(\ul{a};\ul{b})\rightarrow P(\ul{a}\circ_{a'}\ul{c}; \ul{b}\circ_{b'}\ul{d})$$ where $\ul{a'}$ and $\ul{b'}$ denote some non-empty finite sublist of $\ul{a}$ and $\ul{b}$, respectively. The notation $\ul{a}\circ_{a'}\ul{c}$ denotes identifying some sublist of $\ul{a}$ with the appropriate sublist of $\ul{c}$.

\end{itemize}

A map of colored properads $f:P\to Q$ consists of
\begin{itemize}
\item $f_0:\Coll(P)\to \Coll(Q)$; 
\item $f_1:P(\ul{c};\ul{d})\to Q(f_0\ul{c};f_0\ul{d})$ for all biprofiles $(\ul{c},\ul{d})$ in $\cCc$. 
\end{itemize}

We denote the category of all colored properads and properad maps between them as $\Properad$. 
\end{definition} 

Properadic composition is easiest to write down in terms of graph substitution. In the previous talk we described a formal process called \emph{graph substitution}, which now repeat in the case of graphs. 

\begin{definition}\cite[2.4]{hrybook}
Given a graph $G\in\Graph(\ul{c};\ul{d})$, and a graph $H_v\in \Graph(\inn(v);\out(v))$ so that each $H_v$ is equipped with bijections
\begin{itemize} 
\item $\inn(H_v)\rightarrow \inn(v)$ and 
\item $\out(H_v)\rightarrow \out(v)$ 
\end{itemize} one constructs a new new graph $G(H_v)\in\Graph(\ul{c};\ul{d})$ by formally identifying $H_v$ with $v\in G.$ In this case we say that $G(H_v)$ is obtained from $G$ by substitution. 
\end{definition}

The following is an example of (uncolored) graph substitution.  Let $G$ and $P$ be the graphs below. 

\[
\begin{tikzpicture}[scale=.5]
\node[empty](k) at (-1,2){};
\node[empty](l) at (0,2){};
\node[empty](j) at (1,2){}; 
\node[empty](a) at (1,3){}; 
\node[empty](b) at (3,3){}; 
\node[empty](c) at (3,-3){}; 
\node[empty](d) at (1,-3){};

\node[shape=circle,draw=white] (G) at (-4,0) {$\Graph(5,6)\ni G=$};
\node[shape=circle,draw=black] (w) at (2,0) {w};
\node[shape=circle, draw=black](x) at (0,-1) {x};
\node[empty](o) at (2,-3){};
\node[empty](p) at (-2,-3){};
\node[empty](m) at (-1,-4){};
\node[empty](n) at (1,-4){};

\draw [-] (k) to [bend right =10] (x);
\draw [-] (j) to [bend left=10]  (x);
\draw [-] (l) to (x);

\draw [-] (a) to[bend right=10]  (w);
\draw [-] (b) to[bend left=10]  (w);

\draw [-] (x) to[bend right=10]  (m);
\draw [-] (x) to [bend left=10] (n);
\draw [-] (x) to [bend left=30] (o);
\draw [-] (x) to [bend right=30] (p);
\draw[-] (w) to [bend left = 20] (x);

\draw [-] (w) to[bend left=10]  (c);
\draw [-] (w) to[bend right=10]  (d);

\node[empty](k) at (5,2){};
\node[empty](l) at (6,2){};
\node[empty](j) at (7,2){}; 
\node[shape=circle,draw=white] (P) at (12,0) {$=P\in \Graph(4,4)$};
\node[shape=circle,draw=black] (u) at (6,0) {u};
\node[shape=circle, draw=black](v) at (6,-2) {v};
\node[empty](o) at (7,-2){};
\node[empty](p) at (5,-2){};
\node[empty](m) at (5,-4){};
\node[empty](n) at (7,-4){};
\node[empty](q) at (8, 1){};

\draw [-] (k) to [bend right =10] (u);
\draw [-] (j) to [bend left=10]  (u);
\draw [-] (l) to (u);

\draw [-] (u) to [bend left= 30]  (v);
\draw[-] (u) to [bend right=30] (v);
\draw[-] (u) to  (v);

\draw [-] (v) to[bend right=10]  (m);
\draw [-] (v) to [bend left=10] (n);
\draw [-] (u) to [bend left=30] (o);
\draw [-] (u) to [bend right=30] (p);
\draw [-] (q) to [bend left=30] (v);

\end{tikzpicture}. 
\]
The graph $G(P_x)$ is still a member in the category $\Graph(5,6),$ but now has a additional three internal edges. 

\[
\begin{tikzpicture}[scale=.5]
\node[empty](k) at (-1,2){};
\node[empty](l) at (0,2){};
\node[empty](j) at (1,2){}; 
\node[empty](a) at (1,1){}; 
\node[empty](b) at (3,1){}; 
\node[empty](c) at (3,-3){}; 
\node[empty](d) at (1,-3){};

\node[shape=circle,draw=black] (u) at (0,0) {u};
\node[shape=circle,draw=black] (w) at (2,-1) {w};
\node[shape=circle, draw=black](v) at (0,-2) {v};
\node[empty](o) at (1,-2){};
\node[empty](p) at (-1,-2){};
\node[empty](m) at (-1,-4){};
\node[empty](n) at (1,-4){};

\draw [-] (k) to [bend right =10] (u);
\draw [-] (j) to [bend left=10]  (u);
\draw [-] (l) to (u);

\draw [-] (u) to [bend left= 30]  (v);
\draw[-] (u) to [bend right=30] (v);
\draw[-] (u) to  (v);

\draw [-] (a) to[bend right=10]  (w);
\draw [-] (b) to[bend left=10]  (w);

\draw [-] (v) to[bend right=10]  (m);
\draw [-] (v) to [bend left=10] (n);
\draw [-] (u) to [bend left=30] (o);
\draw [-] (u) to [bend right=30] (p);
\draw[-] (w) to [bend left = 20] (v);

\draw [-] (w) to[bend left=10]  (c);
\draw [-] (w) to[bend right=10]  (d);

\end{tikzpicture}
\]
To see how this might encode composition, notice that if we squish down the $3$ internal edges between the vertex $u$ and $v$ we would have something that captures our description of composition. 

Following this discussion, one would say that a $\cCc$-colored properad $P$ is the object you get if you consider the set $\cCc$ as objects (or colors) and morphisms between objects $P(\ul{c};\ul{d})$ are a set of (possibly decorated) $\cCc$-colored graphs in $\Graph(\ul{c},\ul{d})$. Composition of a $G$-configuration of morphisms is given by graph substitution
\[\gamma^G_P:P[G]=\prod_{\Vt(G)}P(\inn(v);\out(v))\to P(\inn G;\out G)
\]
where we are ranging over all maps that arise from graph substitution and look like $G(P_x)\rightarrow G$ in the example above. Properadic composition defined in this way is associative and unital because graph substitution is associative and unital~\cite[2.2.4]{hrybook}.  Symmetric group actions come from weak isomorphisms of graphs and properadic composition is equivariant because graph substitution is an operation which is defined up to weak isomorphism class of graphs. 

\begin{remark}\label{partiall_grafted_gen_comp}
Because graph substitution is associative, we observe that it is possible to define properadic composition one operation at a time. In fact, properadic composition is completely determined by the operations described by partially grafted corollas, $\gamma_P^G$, the graph with just an edge (for identities), and the one vertex graphs (for symmetric group actions).
\end{remark} 

\subsection{The graphical category \texorpdfstring{$\varGamma$}{Γ}}

It should by now be unsurprising to hear that given an uncolored graph $G$ we can freely generate a properad $\varGamma(G)$. 

\begin{definition}\cite[Section 5.1]{hrybook} Given an uncolored graph $G$, the properad $\varGamma(G)$ is a colored properad which has the set $\Ed(G)$ as colors and morphisms are generated by the vertices. 

More explicitly, an operation in $\varGamma(G)(\ul{c};\ul{d})$ is a  \emph{$\hat{G}$-decorated graph}, meaning: 
\begin{itemize} 
\item  a graph $H$ in $\Graph(\ul{c};\ul{d})$ whose edges are colored by edges of $G$;
\item  a function from the vertices of $H$ to the vertices of $G$ which is compatible with the coloring of $H$.
\end{itemize} 
\end{definition} 

\begin{example} \cite[Lemma 5.13]{hrybook}
Given the following graph $G$, 
\[
\begin{tikzpicture}[scale=.5]

\node[empty](l) at (0,3){};

\node[shape=circle,draw=white] (P) at (-2,0) {$G=$};
\node[shape=circle,draw=black] (u) at (0,0) {u};
\node[shape=circle, draw=black](v) at (0,-3) {v};

\node[empty](m) at (0,-5){};

\draw [-] (l) to node [above, fill=white]{1} (u);

\draw [-] (u) to [bend left= 30] node [midway, fill=white] {2} (v);
\draw[-] (u) to [bend right=30] node [midway, left, fill=white] {3} (v);

\draw [-] (v) to node [below, fill=white]{4}  (m);

\end{tikzpicture}
\]
the $\hat{G}$-decorated graph $H$ below is 
an example of a morphism in $\varGamma(G)(1,1;4,4)$.

\[
\begin{tikzpicture}[scale=.5]
\node[empty](l) at (0,3){};
\node[empty](l') at (3,3){}; 

\node[shape=circle,draw=white] (H) at (-2,0) {$H=$};
\node[shape=circle,draw=black] (u) at (0,0) {u};
\node[shape=circle, draw=black](v) at (0,-3) {v};

\node[shape=circle,draw=black] (w) at (3,0) {u};
\node[shape=circle, draw=black](z) at (3,-3) {v};

\node[empty](m) at (0,-5){};
\node[empty](m') at (3,-5){};

\draw [-] (l) to node [above, fill=white]{1} (u);
\draw [-] (l') to node [above, fill=white]{1} (w);

\draw [-] (u) to node [near start] {3} (z);
\draw [-] (w) to node[near end] {3} (v);
\draw[-] (u) to [bend right=30] node [midway, left, fill=white] {2} (v);

\draw[-] (w) to [bend left=30] node [midway,  fill=white] {2} (z);

\draw [-] (v) to node [below, fill=white]{4}  (m);
\draw [-] (z) to node [below, fill=white]{4}  (m');

\end{tikzpicture} 
\] 

\end{example} 

Notice that there are many, many more operations in the properads $\varGamma(G)$ than there were in the operads $\varOmega(T)$ that we discussed in the first lecture. This isn't because we forgot to mention operations in $\varOmega(T)$ but rather because of the following lemma. 

\begin{lemma}\cite[Lemma 5.10]{hrybook} If $G$ is a simply connected graph, then each vertex in $G$ can appear in a morphism in the properad $\varGamma(G)$ at most once.
\end{lemma}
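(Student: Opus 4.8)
The plan is to reinterpret a $\hat{G}$-decorated graph $H$ as a morphism of graphs $\phi\colon H\to G$ and to observe that the assertion ``each vertex of $G$ appears at most once'' is precisely the statement that the underlying vertex map $\phi\colon\Vt(H)\to\Vt(G)$ is injective. Concretely, a $\hat{G}$-decorated graph packages two pieces of data: an edge coloring $\Ed(H)\to\Ed(G)$ and a vertex-labelling $\Vt(H)\to\Vt(G)$, subject to the compatibility condition. I would first unwind that compatibility: at every vertex $w$ of $H$ with label $v=\phi(w)$, the coloring restricts to a bijection between the inputs of $w$ and the inputs of $v$ and between the outputs of $w$ and the outputs of $v$. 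In other words, $\phi$ is a \emph{local isomorphism}, i.e.\ it is bijective on the set of edges incident to each vertex (preserving input/output and matching colors). This local description is the only property of decorated graphs the argument will use.

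With this in hand, the argument is a path-lifting/monodromy argument of the kind familiar from covering space theory, where simple-connectivity of the base forces the cover to be trivial. Suppose, for contradiction, that two distinct vertices $w_0\ne w_n$ of $H$ satisfy $\phi(w_0)=\phi(w_n)=v$. Since every graph here is connected, I would choose a \emph{simple} (non-self-intersecting) path $w_0,e_1,w_1,e_2,\dots,e_n,w_n$ in the underlying undirected graph of $H$; it has positive length $n\ge 1$ because $w_0\ne w_n$. Applying $\phi$ produces a walk $\phi(w_0),\phi(e_1),\dots,\phi(e_n),\phi(w_n)$ in $G$ that is \emph{closed}, since $\phi(w_0)=v=\phi(w_n)$.

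The key point to verify is that this projected walk is \emph{non-backtracking}. Along a simple path all vertices are distinct, so consecutive edges satisfy $e_i\ne e_{i+1}$; as both $e_i$ and $e_{i+1}$ are incident to $w_i$ and $\phi$ is injective on the edges incident to $w_i$, their images $\phi(e_i)\ne\phi(e_{i+1})$ are distinct at $\phi(w_i)$. Hence the projection never immediately reverses an edge. I would then invoke the standard fact that a simply connected graph admits no non-backtracking closed walk of positive length: such a walk would produce an undirected cycle in $G$ (a loop when $n=1$, a pair of parallel edges when $n=2$, and an embedded cycle otherwise), none of which can occur once $G$ is simply connected. This contradicts the closed non-backtracking walk constructed above, so $\phi$ is injective on vertices.

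I expect the main obstacle to be bookkeeping rather than conceptual: extracting the clean ``local isomorphism'' reformulation from the definition of a $\hat{G}$-decorated graph, and checking that the coloring is consistent across the internal edges of $H$ (an internal edge of $H$ colored by $f\in\Ed(G)$ must have $f$ internal in $G$, with source and target the labels of its two endpoints), so that $\phi$ really is a bona fide graph morphism and a local isomorphism. Once that is secured, the non-backtracking-walk contradiction is short. It is worth noting that simple-connectivity is essential and enters only at the very last step: the example preceding the lemma, in which $G$ has two parallel edges between $u$ and $v$, produces exactly such a non-backtracking closed walk and is what allows each vertex to appear twice.
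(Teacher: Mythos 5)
Your argument is correct, but note that these lecture notes never actually prove this lemma: it is stated with a citation to \cite[Lemma 5.10]{hrybook}, so there is no in-paper proof to compare against. Judged on its own merits, your covering-space-style argument is sound and self-contained: the compatibility condition in the definition of a $\hat{G}$-decorated graph does give a local bijection $\inn(w)\to\inn(\phi(w))$, $\out(w)\to\out(\phi(w))$ at every vertex $w$ of $H$; $H$ is connected by the standing definition of graph in this setting; and a reduced (non-backtracking) closed walk of positive length cannot exist in a simply connected graph, since reduced walks in a tree are the unique geodesics between their endpoints. Two small points deserve explicit mention to make the write-up airtight. First, injectivity of $\phi$ on the \emph{full} set of edges incident to $w_i$ (not just on inputs and on outputs separately) uses the fact that no edge of $G$ is simultaneously an input and an output of the same vertex; this is guaranteed because graphs here admit no directed cycles (and a fortiori because $G$ is simply connected), and it is exactly what rules out $\phi(e_i)=\phi(e_{i+1})$ when $e_i$ enters and $e_{i+1}$ exits $w_i$. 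Second, your parenthetical claim that a non-backtracking closed walk of length $n\geq 3$ ``is'' an embedded cycle is slightly loose --- such a walk may revisit vertices (think of a figure-eight traversal) --- but it always \emph{contains} one: pass to a closed sub-walk between a closest pair of repeated vertices, which is then a loop, a pair of parallel edges, or an embedded cycle, each contradicting simple connectivity. With those two clarifications your proof is complete, and it has the added virtue of isolating exactly where simple connectivity enters, consistent with Example \ref{bad_image}, where the parallel edges of $G$ furnish precisely the forbidden non-backtracking closed walk.
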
 

As we mentioned in Remark~\ref{partiall_grafted_gen_comp}, properadic composition is generated by the composites of partially grafted corollas, the graph with one edge, and one vertex graphs. To see that our definition of $\varGamma(G)$ actually is a properad, it then suffices to check the following lemma. 

\begin{lemma} 
All $\hat{G}$-decorated graphs can be built iteratively using partially grafted corollas. 
\end{lemma}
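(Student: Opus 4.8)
The plan is to induct on the number of vertices of the $\hat{G}$-decorated graph $H$, peeling off a single corolla at each stage by means of a single partially grafted corolla substitution; this is the natural strategy given the observation (Remark~\ref{partiall_grafted_gen_comp}) that properadic composition is generated by partially grafted corollas. Being a $\hat{G}$-decorated graph, $H$ is a finite connected directed graph without directed cycles (Definition~\ref{def_graph}), so the structure theory of finite connected acyclic digraphs is available. The base cases are $H$ the graph with a single edge (an identity) and $H$ a one-vertex graph (a corolla); both are already among the allowed atoms, so there is nothing to prove.

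For the inductive step, suppose $H$ has $n \ge 2$ vertices. The heart of the matter is the claim that $H$ has a vertex $w$ which is either an \emph{internal source} (all internal edges at $w$ are outputs) or an \emph{internal sink} (all internal edges at $w$ are inputs) and whose deletion $A := H \setminus w$ is still connected. Granting this, $A$ is a $\hat{G}$-decorated graph on $n-1$ vertices: it is connected, it inherits the edge-coloring and the vertex labelling from $H$, and the internal edges of $H$ formerly incident to $w$ simply become legs of $A$. Let $P$ be the partially grafted corolla on two vertices $u$ and $w$ whose grafted list is exactly the nonempty list of internal edges of $H$ at $w$ (nonempty since $H$ is connected with $n\ge 2$), arranged so that they are outputs of $u$ which are inputs of $w$ when $w$ is an internal sink, and the reverse when $w$ is a source. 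Setting $\out(u)=\out(A)$ and $\inn(u)=\inn(A)$, one checks directly that $H = P(A_u)$, the graph substitution of $A$ into the vertex $u$ of $P$. By the inductive hypothesis $A$ is built iteratively from corollas using partially grafted corollas, and prepending this one substitution exhibits $H$ in the same form. The only routine point is the bookkeeping of orderings and of the $\hat{G}$-decoration, both of which are transported along the substitution and match by construction.

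The main obstacle is the existence claim, namely that a finite connected acyclic digraph with at least two vertices always has a non-cut internal source or internal sink. I would prove this using the block--cut tree (the tree of maximal $2$-connected components) of the underlying undirected graph of $H$. Choose a leaf block $B$; it meets the rest of $H$ in a single cut vertex $c$, and every other vertex of $B$ is a non-cut vertex of $H$, hence lies in no other block, so all of its internal edges lie inside $B$. If $B$ is a single edge $\{c,x\}$, then $x$ has exactly one internal edge and is accordingly an internal source or internal sink, and it is non-cut. If $B$ is $2$-connected with at least three vertices, consider the sub-digraph induced on the vertex set of $B$; a linear extension of it has distinct minimum and maximum, which are respectively a source and a sink of this sub-digraph, and at most one of them equals $c$. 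The other is then a non-cut vertex of $H$ all of whose internal edges lie in $B$, hence an internal source or internal sink of $H$. This establishes the claim and completes the induction.

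I could instead phrase the induction as a recursive partition of $\Vt(H)$ into two connected pieces with all crossing edges co-oriented, substituted into the two vertices of a partially grafted corolla. However, peeling a single corolla keeps both the inductive step and the verification $H = P(A_u)$ maximally transparent, so I would prefer the vertex-deletion formulation above, with the block--cut tree argument isolated as the one genuinely combinatorial ingredient.
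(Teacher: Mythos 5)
The paper states this lemma without proof (it is quoted from the monograph \cite{hrybook}), so there is no in-text argument to compare against; judged on its own merits, your proof is correct. The reduction step is sound: if $w$ is a non-cut vertex of $H$ all of whose internal edges are co-oriented, and $A = H\setminus w$, then the two-vertex graph $P$ you build (with $\inn(u)=\inn(A)$, $\out(u)=\out(A)$, and grafted edges exactly the internal edges of $H$ at $w$) is a partially grafted corolla and $H = P(A_u)$, i.e.\ $H$ is the composite of $A$ with the corolla $C_w$ along $P$, so the induction closes. More importantly, you correctly identified where the naive argument breaks: an acyclic connected digraph always has a source and a sink, but a source or sink may be a cut vertex (e.g.\ two vertices each feeding one edge into a common sink), in which case deleting it destroys connectedness; your block--cut tree argument supplies exactly the missing existence statement. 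Two small repairs, neither serious: (i) your case split on the leaf block $B$ omits blocks consisting of two vertices joined by several parallel edges, which are not single edges yet have only two vertices --- and such blocks are ubiquitous here, being precisely the partially grafted corollas; however, the linear-extension argument you give for the ``at least three vertices'' case applies verbatim to any block with at least two vertices, since acyclicity of $H$ forces all parallel edges to point the same way, so the source and sink of $B$ are still distinct. (ii) When $H$ has no cut vertex at all, the sentence ``$B$ meets the rest of $H$ in a single cut vertex $c$'' is vacuous, but then every vertex is non-cut and any source or sink of the whole digraph serves as $w$. With those two degenerate cases acknowledged, the argument is complete.
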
 

The naive guess, based on what we expect from understanding $\varDelta$ and $\varOmega$, would be to define a category $\varGamma$ which has as objects the graphical properads $\varGamma(G)$ and morphisms all properad maps between them. This is, unfortunately, not the appropriate definition of $\varGamma$ as there maps between graphical properads that exhibit idiosyncratic behavior. 

\begin{definition}
A properad morphism $f:\varGamma(G)\to \varGamma(H)$ consists of: 
\begin{itemize} 
\item a function $f_0: \Ed(G)\rightarrow\Ed(H)$ together with
\item  a map $f_1:\Vt(G)\rightarrow\{\Vt(H)\text{-decorated graphs}\}$ such that for every $v\in\Vt(G)$, $f_1(v)$ is an $\hat{H}$-decorated graph in $\Graph(f_0\inn v; f_0\out v)$. 
\end{itemize} 
\end{definition}

\begin{definition}\label{image definition} The image of $f: \varGamma(G)\rightarrow \varGamma(H)$ is $f_0G\{f_1(v)\}_{v\in\Vt(G)}$ which is naturally $\Vt(H)$-decorated. The notation $G\{f_1(v)\}_{v\in\Vt(G)}$ stands for performing iterated graph substitution of $\Vt(H)$-decorated graphs at each vertex $v$ in $G$. 
\end{definition}

Morphisms between graphical properads are very strange, so we will pause here and give an explicit description of the image of a map $f:\varGamma(G)\to \varGamma(H)$.

\begin{example}\label{example no vertex graph}
Suppose that $G= \,\downarrow$ is the graph with no vertices and let $Q$ be a $\cCc$-colored properad. Then a properad map $f:\varGamma(\,\downarrow\,)\rightarrow Q$ is a choice of color $c\in\cCc$. 
\end{example} 

\begin{example}  \label{bad_image}
An example of a morphism of graphical properads that behaves poorly is the following. Suppose $G$ is the graph \[
\begin{tikzpicture}[scale=.5]

\node[shape=circle,draw=white] (P) at (-2,0) {$G=$};
\node[shape=circle,draw=black] (u) at (0,0) {u};
\node[shape=circle, draw=black](v) at (0,-3) {v};

\draw [-] (u) to [bend left= 30] node [midway, fill=white] {2} (v);
\draw[-] (u) to [bend right=30] node [midway, left, fill=white] {1} (v);

\end{tikzpicture}
\]
let $f:\varGamma(G)\rightarrow \varGamma(G)$ be the morphism where $f_0$ is the identity on edge sets and 
\begin{itemize} 

\item $f_{1}(v)$ is the $\hat{G}$ decorated graph 
 \[
\begin{tikzpicture}[scale=.5]

\node[shape=circle,draw=white] (P) at (-2,0) {$f_{1}(v)=$};
\node[empty] (u) at (0,0) {};
\node[shape=circle, draw=black](v) at (0,-3) {v};

\draw [-] (u) to [bend left= 30] node [midway, fill=white] {2} (v);
\draw[-] (u) to [bend right=30] node [midway, left, fill=white] {1} (v);

\end{tikzpicture}
\] and 

\item $f_{1}(u)$ is the $\hat{G}$ decorated graph 

\[
\begin{tikzpicture}[scale=.5]

\node[shape=circle,draw=white] (H) at (-4,0) {$f_{1}(u)=$};
\node[shape=circle,draw=black] (u) at (0,0) {u};
\node[shape=circle, draw=black](v) at (2,-4) {v};

\node[shape=circle,draw=black] (w) at (4,0) {u};

\node[empty](m) at (0,-5){};
\node[empty](m') at (4,-5){};

\draw [-] (u) to [bend left= 15] node [very near end] {2} (m');
\draw [-] (w) to [bend right= 15] node [very near end, left] {1} (m);
\draw[-] (u) to [bend right=30] node [near start, left] {2} (v);

\draw[-] (w) to [bend left=30] node [near start] {1} (v);

\end{tikzpicture}. 
\] 
The image of $f$ in $\varGamma(G)$ is then the $\hat{G}$-decorated graph 

\[
\begin{tikzpicture}[scale=.5]

\node[shape=circle,draw=white] (H) at (-4,0) {$\im(G)=$};
\node[shape=circle,draw=black] (u) at (0,0) {u};
\node[shape=circle, draw=black](v) at (2,-4) {v};
\node[shape=circle, draw=black](x) at (2,-8) {v};

\node[shape=circle,draw=black] (w) at (4,0) {u};

\draw [-] (u) to [bend left= 80] node [near end] {2} (x);
\draw [-] (w) to [bend right= 80] node [near end, left] {1} (x);
\draw[-] (u) to [bend right=40] node [near start, left] {2} (v);

\draw[-] (w) to [bend left=40] node [near start] {1} (v);

\end{tikzpicture}. 
\]

\end{itemize}

\end{example}

As we saw in Example~\ref{bad_image}, properad maps $f:\varGamma(H)\rightarrow\varGamma(G)$ need not have the the property that the image of $H$ is a subgraph of $G$. 
This kind of behavior does not show up in dendroidal sets. In fact, for maps into simply connected graphical properads behaves exactly as we would expect from the dendroidal case. 

\begin{proposition} \cite[Proposition 5.32]{hrybook}
If the target of $f:\varGamma(H)\rightarrow \varGamma(G)$ is simply connected (eg any object of $\varOmega$), then $f$ is uniquely determined by what it does on edges.  
\end{proposition}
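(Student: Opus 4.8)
The plan is to show that the edge-level data $f_0$ forces the vertex-level data $f_1$. Recall that a properad morphism $f\colon\varGamma(H)\to\varGamma(G)$ is a pair $(f_0,f_1)$ in which $f_0\colon\Ed(H)\to\Ed(G)$ is a function and, for each $v\in\Vt(H)$, the component $f_1(v)$ is a connected $\hat G$-decorated graph with biprofile $(f_0\inn(v);f_0\out(v))$. Thus it suffices to prove that, when $G$ is simply connected, such a decorated graph is \emph{uniquely} determined by its biprofile: if so, then any two morphisms $f,g$ with $f_0=g_0$ satisfy $f_1(v)=g_1(v)$ for every $v$, whence $f=g$. Equivalently, I would show that for simply connected $G$ every hom-set $\varGamma(G)(\ul c;\ul d)$ has at most one element, and the proposition follows by applying this to each biprofile $(f_0\inn(v);f_0\out(v))$ (the relevant element existing because $f$ itself does).

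So fix a connected $\hat G$-decorated graph $K$, given by an underlying graph $K$ together with a vertex-decoration $\phi\colon\Vt(K)\to\Vt(G)$ and a compatible edge-coloring; here $\phi$ is injective, since for simply connected $G$ each vertex of $G$ appears at most once in any operation of $\varGamma(G)$ \cite[Lemma 5.10]{hrybook}. Write $S=\phi(\Vt(K))\subseteq\Vt(G)$. The first step is to see that $K$ is forced to be the full sub-configuration of $G$ spanned by $S$. Compatibility of the decoration means that around each vertex $w\in\Vt(K)$ the colored half-edges reproduce those of $\phi(w)$ in $G$; in particular, whenever an internal edge $e$ of $G$ joins two vertices of $S$, the two matching $e$-colored half-edges occurring in $K$ are either spliced into a single internal edge of $K$ or left as two separate legs. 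This is exactly the ambiguity responsible for the pathological map of Example~\ref{bad_image}. Here, however, I would invoke simple-connectivity: every internal edge of $G$ is a bridge, hence the unique simple path in $G$ between its endpoints, and since $\phi$ is injective a path in $K$ projects to a simple path in $G$. Were the two $e$-colored half-edges left unspliced, the vertices of $S$ on either side of $e$ could not be connected inside $K$, contradicting connectedness of $K$. Hence every such matching pair is spliced, each edge-color occurs at most once, and $K$ is the full connected sub-configuration on $S$. I expect this step to be the main obstacle, as it is precisely where the hypothesis does its essential work.

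It remains to recover $S$ from the biprofile $(\ul c;\ul d)$. If $K$ has no vertices then $K$ is the bare edge and is determined by its single color, so assume $K$ has at least one vertex. Because each color occurs at most once, membership of a given edge of $G$ in the biprofile is unambiguous, and I would reconstruct $S$ by a deterministic traversal of $G$. The head of every input leg and the tail of every output leg of $K$ is attached to a vertex of $K$, so the corresponding target and source vertices in $G$ furnish seeds lying in $S$; starting from these one repeatedly adjoins the vertex across any incident edge of $G$ that does \emph{not} occur in $(\ul c;\ul d)$, and stops at every edge that does. Since $G$ is simply connected this traversal never revisits a vertex and terminates, yielding a single well-defined vertex set; by the previous paragraph that set equals $S$ and determines $K$. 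Thus every hom-set of $\varGamma(G)$ is a singleton and the proposition follows. Finally, every object of $\varOmega$ is a tree and so simply connected, recovering the familiar dendroidal fact that a map in $\varOmega$ is determined by its effect on colors.
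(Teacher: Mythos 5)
Your proof is correct and follows essentially the same route as the argument in \cite[Proposition 5.32]{hrybook} to which these notes defer (the notes themselves give no proof): one reduces to showing that elements of a simply connected graphical properad $\varGamma(G)$ are uniquely determined by their biprofiles, which you obtain from \cite[Lemma 5.10]{hrybook} together with the observation that every internal edge of a simply connected graph is a bridge, so that any matching pair of half-edges must be spliced in a connected $\hat{G}$-decorated graph. The only step needing a one-line patch is your seed-and-traverse reconstruction of $S$, which silently assumes the biprofile is nonempty; in the degenerate closed case (possible only when $H$ is a single vertex with no inputs or outputs, so the biprofile is empty) there are no seeds, but then every edge of $G$ meeting $S$ must be internal to the spanned subgraph, so connectedness of $G$ forces $S=\Vt(G)$ and uniqueness holds there as well.
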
 

As we will explain in section \ref{Lecture3}, in order for our graphical category to have the proper sense of homotopy theory, we will want to force a property of this kind on the category $\varGamma$. 

\begin{proposition}
If the image of $H$ under $f:\varGamma(H)\rightarrow \varGamma(G)$ is a subgraph of $G$, then $f$ is uniquely determined by what it does on edges. 
\end{proposition}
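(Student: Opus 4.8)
The plan is to reduce the statement to a purely combinatorial rigidity property of subgraphs. A properad map $f\colon\varGamma(H)\to\varGamma(G)$ is exactly the pair of data $(f_0,f_1)$, so the assertion that ``$f$ is determined by what it does on edges'' means precisely that the hypothesis forces $f_1$ once $f_0$ is known. First I would unwind the hypothesis using Definition~\ref{image definition}: the image of $H$ is the $\Vt(G)$-decorated graph $f_0H\{f_1(w)\}_{w\in\Vt(H)}$, and saying it is a subgraph $G'\subseteq G$ means the tautological decoration is an honest inclusion, so that distinct vertices and edges of $G'$ decorate distinct vertices and edges of $G$.

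The key local observation is that each individual $f_1(w)$ is recovered as an (induced) subgraph of $G'$, hence of $G$. Under the iterated substitution $G'=H\{f_1(w)\}$ the vertex set decomposes as $\Vt(G')=\bigsqcup_{w}\Vt(f_1(w))$, and the sub-configuration of $G'$ spanned by $\Vt(f_1(w))$ is exactly $f_1(w)$: its internal edges are the internal edges of $f_1(w)$, while the edges gluing it to the remaining pieces are precisely the edges of $H$ incident to $w$. Since $G'$ is an induced subgraph of $G$, this exhibits $f_1(w)$ as a connected subgraph of $G$ whose input and output legs are the prescribed edges $f_0(\inn w)$ and $f_0(\out w)$. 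Thus $f_1(w)$ is a connected subgraph of $G$ whose legs depend on $f_0$ alone, and the whole problem collapses to showing that such a subgraph is unique.

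The hard part, and the crux of the whole argument, is therefore the rigidity lemma: \emph{a connected subgraph of $G$ is uniquely determined by its set of input and output legs.} I would prove this by contradiction, and I expect it to be where essentially all the content lies. Given two connected subgraphs $G_1,G_2$ with vertex sets $W_1,W_2$ and identical legs, suppose $S=W_1\setminus W_2\neq\varnothing$. Working only from the dictionary ``an edge $e$ is an input leg of a subgraph iff the head of $e$ lies in its vertex set but the tail does not, and dually for output legs,'' one checks that for any $s\in S$ and any edge of $G$ joining $s$ to a vertex $t$, the equality of the leg sets of $G_1$ and $G_2$ forces both $t\in W_1$ and $t\notin W_2$; each of the four cases (an incoming or outgoing edge tested against the shared input or output leg set) otherwise places some vertex simultaneously inside and outside a subgraph. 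Hence $S$ is closed under passing to neighbours in $G$, and since $G$ is connected this gives $S=\Vt(G)$, whence $W_2=\varnothing$.

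It then remains only to dispose of the degenerate case $W_2=\varnothing$, in which $G_2$ is the edgeless graph $\downarrow$: its single leg is at once an input and an output leg, so the same must hold for $G_1$, which is impossible for a subgraph containing a vertex (that edge would have both endpoints in $W_1$ and hence be internal rather than a leg). This forces $G_1=G_2$, establishing the lemma. Applying it to each $f_1(w)$ shows that $f_1$ is entirely recovered from $f_0$, which completes the proof; by comparison, the bookkeeping in the first two paragraphs is routine.
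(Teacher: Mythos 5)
These lecture notes state the proposition without proof (it is imported from the monograph \cite{hrybook}), so there is no in-paper argument to compare yours against; judged on its own merits, your proof is correct. The reduction is sound: a map $f\colon\varGamma(H)\to\varGamma(G)$ is precisely the pair $(f_0,f_1)$, graph substitution makes each $f_1(w)$ a subgraph of $\im f$ (here it matters that $H$ has no directed cycles, so no two legs of $f_1(w)$ are glued to one another in the substitution), and a subgraph of a subgraph of $G$ is a subgraph of $G$; hence the hypothesis exhibits each $f_1(w)$ as a connected subgraph of $G$ with input legs $f_0(\inn w)$ and output legs $f_0(\out w)$, and your rigidity lemma finishes the job. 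The lemma itself and its neighbour-closure proof are valid, but two points need tightening. First, your dictionary characterizing legs of a subgraph holds only for subgraphs with at least one vertex: it fails for the exceptional edge $\downarrow_e$, whose unique edge is simultaneously an input and an output leg although neither endpoint lies in the empty vertex set. Consequently the closure argument tacitly assumes $W_2\neq\varnothing$, and the clean organization is a case split rather than your linear order: if $W_1,W_2\neq\varnothing$, the closure argument yields $W_1=W_2$ and then $G_1=G_2$ because subgraphs in this formalism are induced (a subgraph contains every edge of $G$ incident to its vertices, which is also the fact underlying your dictionary); if one vertex set is empty, your ``degenerate case'' argument applies verbatim. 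All the needed content is present in your write-up; only the wiring of the cases needs repair. Second, a terminological slip: $\downarrow$ is the graph with no \emph{vertices} (it has one edge), not the ``edgeless graph.''
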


\begin{definition} 
The \emph{graphical category} $\varGamma$ is the category with objects graphical properads and morphisms the subset of properad maps $f:\varGamma(H)\to \varGamma(G)$ consisting of those $f$ with the property that $\im f$ is a subgraph of $G$. 
\end{definition} 

\begin{definition} 
The category of \emph{graphical sets} is the category of presheaves on $\varGamma$, that is  $\Set^{\varGamma^{\op}}$. 
\end{definition} 

For every graph $G$ an element in the set $X_G$ is called a \emph{graphex} with shape $G$. The plural form of graphex is graphices. The representable objects of shape $G$ are $\varGamma[G]=\varGamma(-,G)$. 

\subsection{The properadic nerve} 

The obvious question to ask at this point is how do we know that by throwing out badly behaved properad maps that we are still looking at a reasonable definition of graphical sets?  The \emph{properadic nerve}~\cite[Definition 7.5]{hrybook} is the functor $$N:\Properad \longrightarrow \Set^{\varGamma^{op}}$$defined by $$ (NP)_G = \Properad(\varGamma(G),P ) $$ for $P$ a properad. A graphex in $(NP)_G$ is really a $P$-decoration of $G$, which consists of a coloring of the edges in $G$ by the colors of $P$ and a decoration of each vertex in $G$ by an element in $P$ with the corresponding profiles.

\begin{proposition} \cite[Proposition 7.39]{hrybook} 
 The properadic nerve $$N:\Properad \longrightarrow \Set^{\varGamma^{op}}$$ is fully faithful. 
\end{proposition}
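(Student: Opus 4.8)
The plan is to prove full faithfulness directly, by showing that the two elementary shapes in $\varGamma$ — the edge $\downarrow$ and the one-vertex graphs (corollas) — suffice to reconstruct a properad morphism from a map of nerves. First I would record the two natural identifications that make this work. By Example~\ref{example no vertex graph} we have $(NP)_{\downarrow}=\Properad(\varGamma(\downarrow),P)\cong\col(P)$, and for a corolla $C$ the set $(NP)_{C}=\Properad(\varGamma(C),P)$ recovers the operations of $P$, indexed by the colorings of the legs of $C$ (a map $\varGamma(C)\to P$ is precisely a coloring of the edges of $C$ by $\col(P)$ together with a decoration of the vertex by an operation of $P$ of the matching biprofile). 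Both identifications are natural: a properad map $g\colon P\to Q$ induces on $\downarrow$ exactly the color map $g_0$, and on corollas exactly the operation map $g_1$. Since a properad morphism \emph{is} the data $(g_0,g_1)$, the assignment $g\mapsto Ng$ already determines $g$, which gives faithfulness.

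For fullness, given a natural transformation $\phi\colon NP\to NQ$, I would define $f_0:=\phi_{\downarrow}\colon\col(P)\to\col(Q)$ and $f_1:=\phi_C$ read off through the corolla identification above, and then verify that $f=(f_0,f_1)$ is a genuine properad morphism and that $Nf=\phi$. The key mechanism is naturality of $\phi$ against the generating maps of $\varGamma$. By Remark~\ref{partiall_grafted_gen_comp}, properadic composition is generated by partially grafted corollas, so a pair of composable operations $p,p'$ of $P$ is the same datum as a $P$-decoration $z\in(NP)_R$ of the relevant partially grafted corolla $R$; the two corolla-inclusions $C_u,C_v\hookrightarrow R$ pull $z$ back to $p$ and $p'$, while the inner coface $d\colon C\to R$ that collapses the internal edges pulls $z$ back to the composite. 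Applying naturality of $\phi$ at these morphisms, together with the identifications of the first paragraph, yields $f_1(p\boxtimes p')=f_1(p)\boxtimes f_1(p')$. In the same way the isomorphisms (weak isomorphisms of corollas) force equivariance of $f_1$, and the codegeneracy substituting $\downarrow$ into a bivalent vertex forces compatibility with units. Crucially, every auxiliary morphism invoked here — coface map, codegeneracy, isomorphism — has image a subgraph, so each is a bona fide morphism of $\varGamma$ and the pathology of Example~\ref{bad_image} never enters.

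To conclude $Nf=\phi$, I would fix an arbitrary graph $G$ and a decoration $x\in(NP)_G=\Properad(\varGamma(G),P)$. Such an $x$ is completely determined by the colors it assigns to the edges of $G$ and the operation it assigns to each vertex, that is, by its restrictions along the edge-inclusions $e\hookrightarrow G$ and the corolla-inclusions $C_v\hookrightarrow G$, all of which are morphisms of $\varGamma$. Naturality of $\phi$ then forces $\phi_G(x)$ to color each edge by $f_0$ of the color $x$ assigned to it and to decorate each vertex $v$ by $f_1$ of the operation $x$ placed there; but that is exactly the decoration $f\circ x=(Nf)_G(x)$. Hence $\phi_G=(Nf)_G$ for every $G$, so $\phi=Nf$ and $N$ is full.

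The main obstacle is the middle step: translating naturality into the full list of properad axioms for $f_1$. This requires that properadic composition genuinely be reconstructed one partially grafted corolla at a time (Remark~\ref{partiall_grafted_gen_comp} and the lemma that every $\hat{G}$-decorated graph is built iteratively from partially grafted corollas), so that checking associativity, unitality, and equivariance of $f_1$ reduces to finitely many naturality squares, and it requires verifying throughout that the $\varGamma$-morphisms used in those squares all have image a subgraph. Once this bookkeeping is in place, faithfulness and fullness follow as above.
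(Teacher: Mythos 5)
Your argument is correct and is essentially the proof of the cited source: the paper itself only states this result, deferring to \cite[Proposition 7.39]{hrybook}, whose proof likewise reconstructs $f=(f_0,f_1)$ from the components of $\phi$ at the edge $\downarrow$ and the corollas, verifies the properad axioms by naturality against the generating morphisms of $\varGamma$ (inner cofaces of partially grafted corollas, codegeneracies, isomorphisms), and uses the strict Segal property of nerves to conclude $Nf=\phi$ on arbitrary graphs. No gaps to flag.
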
 

This proposition implies that while we have lost some maps in $\varGamma$ we have still enough information so that the entire category $\Properad$ sits inside of $\Set^{\varGamma^{op}}$.

\subsection{Cofaces and codegeneracies} 

As in our first lecture, the coface and codegeneracy maps are given by graph substitutions of various kinds. 
A \emph{codegeneracy} map  $\sigma^v:H\rightarrow H(\downarrow)$ is a map induced by substitution of the graph with one edge $\downarrow$ into $(1,1)$-vertex $v\in\Vt(H)$. This has the effect of deleting a vertex.  Like in $\varOmega$, an \emph{inner coface} map will have the effect of ``blowing up'' the graph between two vertices by an inner substitution of a partially grafted corolla $d^{uv}:G\rightarrow G(P)$. 

\begin{example} As an example of an inner coface map consider the graph substitution we have already seen, 

\[
\begin{tikzpicture}[scale=.5]
\node[empty](k) at (-1,2){};
\node[empty](l) at (0,2){};
\node[empty](j) at (1,2){}; 
\node[empty](a) at (1,3){}; 
\node[empty](b) at (3,3){}; 
\node[empty](c) at (3,-3){}; 
\node[empty](d) at (1,-3){};

\node[shape=circle,draw=white] (G) at (-2,0) {$G=$};
\node[shape=circle,draw=black] (w) at (2,0) {w};
\node[empty](f) at (3,0){};
\node[empty](g) at (5,0){};
\node[shape=circle, draw=black](x) at (0,-1) {x};
\node[empty](o) at (2,-3){};
\node[empty](p) at (-2,-3){};
\node[empty](m) at (-1,-4){};
\node[empty](n) at (1,-4){};

\draw[->] (f) to (g);
\draw [-] (k) to [bend right =10] (x);
\draw [-] (j) to [bend left=10]  (x);
\draw [-] (l) to (x);

\draw [-] (a) to[bend right=10]  (w);
\draw [-] (b) to[bend left=10]  (w);

\draw [-] (x) to[bend right=10]  (m);
\draw [-] (x) to [bend left=10] (n);
\draw [-] (x) to [bend left=30] (o);
\draw [-] (x) to [bend right=30] (p);
\draw[-] (w) to [bend left = 20] (x);

\draw [-] (w) to[bend left=10]  (c);
\draw [-] (w) to[bend right=10]  (d);

\node[empty](k) at (6,2){};
\node[empty](l) at (7,2){};
\node[empty](j) at (8,2){}; 
\node[empty](a) at (9,1){}; 
\node[empty](b) at (11,1){}; 
\node[empty](c) at (11,-3){}; 
\node[empty](d) at (9,-3){};

\node[shape=circle,draw=black] (u) at (7,0) {u};
\node[shape=circle,draw=black] (w) at (10,-1) {w};
\node[shape=circle,draw=white] (GP) at (12,-1) {$=G(P)$};
\node[shape=circle, draw=black](v) at (7,-2) {v};
\node[empty](o) at (9,-2){};
\node[empty](p) at (5,-2){};
\node[empty](m) at (6,-4){};
\node[empty](n) at (8,-4){};

\draw [-] (k) to [bend right =10] (u);
\draw [-] (j) to [bend left=10]  (u);
\draw [-] (l) to (u);

\draw [-] (u) to [bend left= 30]  (v);
\draw[-] (u) to [bend right=30] (v);
\draw[-] (u) to  (v);

\draw [-] (a) to[bend right=10]  (w);
\draw [-] (b) to[bend left=10]  (w);

\draw [-] (v) to[bend right=10]  (m);
\draw [-] (v) to [bend left=10] (n);
\draw [-] (u) to [bend left=30] (o);
\draw [-] (u) to [bend right=30] (p);
\draw[-] (w) to [bend left = 20] (v);

\draw [-] (w) to[bend left=10]  (c);
\draw [-] (w) to[bend right=10]  (d);

\end{tikzpicture}. 
\]
where the partially grafted corolla $P\in\Graph(4,4)$ is pictured below. 
\[
\begin{tikzpicture}[scale=.5]
\node[empty](k) at (-1,2){};
\node[empty](l) at (0,2){};
\node[empty](j) at (1,2){}; 
\node[shape=circle,draw=white] (P) at (-2,0) {$P=$};
\node[shape=circle,draw=black] (u) at (0,0) {u};
\node[shape=circle, draw=black](v) at (0,-2) {v};
\node[empty](o) at (1,-2){};
\node[empty](p) at (-1,-2){};
\node[empty](m) at (-1,-4){};
\node[empty](n) at (1,-4){};
\node[empty](q) at (1, 1){};

\draw [-] (k) to [bend right =10] (u);
\draw [-] (j) to [bend left=10]  (u);
\draw [-] (l) to (u);

\draw [-] (u) to [bend left= 30]  (v);
\draw[-] (u) to [bend right=30] (v);
\draw[-] (u) to  (v);

\draw [-] (v) to[bend right=10]  (m);
\draw [-] (v) to [bend left=10] (n);
\draw [-] (u) to [bend left=30] (o);
\draw [-] (u) to [bend right=30] (p);
\draw [-] (q) to [bend left=30] (v);
\end{tikzpicture}
\]

\end{example} 

\begin{example} When restricted to linear graphs, an inner coface map as above is the same as an inner coface map in the simplicial category $\varDelta$. \cite[Example 6.4]{hrybook} \end{example} 

An \emph{outer coface} map $d^{v}:G\rightarrow P(G)$ is an outer substitution of a graph $G$ into a partially grafted corolla.  In the next section, we will discuss how these maps generate the whole category $\varGamma$ in the sense that all morphisms in $\varGamma$ are compositions of (inner or outer) coface maps, codegeneracies and isomorphisms.   

\begin{definition}  A \emph{face} of a representable $\varGamma[H]$ is given by considering the image of an inner or outer coface map. The boundary of $\varGamma[H]$ is defined as $\boundary[H]=\bigcup_{\alpha}\boundary_\alpha [H]\subset \varGamma[H]$ where $\alpha$ ranges over all inner and outer coface maps.  The $\beta$-\emph{horn} is then defined as $\varLambda^\beta[H]\subset \varGamma[H]=\bigcup_{\beta\ne \alpha}\boundary_\alpha [H]$ where $\beta$ is a coface map.

\end{definition} 

\begin{definition} 
A graphical set $X$ is a \emph{quasi-properad} if, for all inner coface maps $\alpha$ and all $H$ in $\varGamma$, the diagram 
\[
\begin{tikzcd}
\varLambda^\alpha[H]\rar\dar&X\\
\varGamma[H]\arrow[dotted]{ur}
\end{tikzcd}
\] admits a lift. 
\end{definition} 

A model category structure on $\Set^{\varGamma^{op}}$ in which quasi-properads are the fibrant objects is work in progress between the authors and D.\,Yau. 

\section{Generalized Reedy structures and a Segal model}\label{Lecture3}

In the previous section we described the graphical category $\varGamma$ and quasi-properads. For more details on why this is precisely a properad ``up to homotopy'' see the description in \cite[7.2]{hrybook}.  In this section we will describe the Reedy structure of $\varGamma$ and use it as a starting point to construct one model category structure for infinity properads.

\subsection{Generalized Reedy categories}

\begin{definition}\label{reedy definition}\cite[Definition 1.1]{bm_reedy}
A dualizable generalized Reedy structure on a small category $\rR$ consists of two subcategories $\rR^+$ and $\rR^-$ which each contain all objects of $\rR$, together with a degree function $\Ob(\rR)\to \nN$ satisfying:
\begin{enumerate}
\item non-invertible morphisms in $\rR^+$ (respectively $\rR^-$) raise (respectively lower degree). Isomorphisms preserve degree.
\item $\rR^+\cap \rR^-=\Iso(\rR)$
\item Every morphism $f$ factors as $f=gh$ such that $g\in \rR^+$ and $h\in \rR^-$ and this factorization is unique up to isomorphism.
\item If $\theta f=f$ for $\theta\in \Iso(\rR)$ and $f\in \rR^-$ then $\theta$ is an identity.
\item  $f\theta=f$ for $\theta\in \Iso(\rR)$ and $f\in \rR^+$ then $\theta$ is an identity.
\end{enumerate}
\end{definition}

\begin{remark} 
A category $\rR$ that satisfies axioms $(1)-(4)$ is a generalized Reedy category. If, in addition, $\rR$ satisfies axiom $(5)$ then $\rR$ is said to be dualizable, which implies that $\rR^{op}$ is also a generalized Reedy category. 
\end{remark}

A (classical) Reedy category is a generalized Reedy category $\rR$ in which every element of $\Iso(\rR)$ is an identity. Examples of classical Reedy categories include $\varDelta$ and $\varDelta^{\op}$. Examples of generalized Reedy categories include the dendroidal category $\varOmega$, finite sets, pointed finite sets, and the cyclic category $\varLambda$.

The main idea of Reedy categories is that we can think about lifting morphisms from $\rR$ to $\Mm^{\rR}$ by induction on the degree of our objects.  To formalize this idea we introduce the notion of latching and matching objects.  

For any $r\in \rR$, the category $\rR^+(r)$ is defined to be a full subcategory of $\rR^+\downarrow r$ consisting of those maps with target $r$ which are not invertible. Similarly, the category $\rR^-(r)$ is the full subcategory of $r\downarrow \rR^{-}$ consisting of maps $\alpha:r\to s$ which are non-invertible. One can now define the \emph{latching object} $$L_r(X)=\colim_{\alpha\in \rR^+(r)}X_s,$$ for each $X$ in $\Mm^{\rR}$ which comes equipped with a map $L_r(X)\rightarrow X_r$.
Similarly, for each $X\in\Mm^{\rR}$ we define the \emph{matching object} $$\lim_{\alpha\in\rR^-(r)}X_s=M_r(X)$$ which comes equipped with a map $X_r\rightarrow M_r(X)$.

\begin{definition} If $\Mm$ is a cofibrantly generated model category, and $\rR$ is generalized Reedy, we say that a morphism $f:X\to Y$ in $\Mm^\rR$ is: 
\begin{itemize}
\item a Reedy cofibration if $X_r\cup_{L_r X}L_r Y\to Y_r$ is a cofibration in $\Mm^{\Aut (r)}$ for all $r\in\rR$;
\item a Reedy weak equivalence if $X_r\to Y_r$ is a weak equivalence in $\Mm^{\Aut(r)}$ for all $r\in\rR$ ; 
\item a Reedy fibration if $X_r\to M_r X\times_{M_r Y}Y_r$ is a fibration in $\Mm^{\Aut(r)}$ for all $r\in\rR$. 
\end{itemize}
\end{definition} 

\begin{theorem}\cite[Theorem 1.6]{bm_reedy} If $\Mm$ is a cofibrantly generated model category and $\rR$ is a generalized Reedy category then the diagram category $\Mm^\rR$ is a model category with the Reedy fibrations, Reedy cofibrations, and Reedy weak equivalences defined above. 
\end{theorem}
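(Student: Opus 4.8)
The plan is to verify the model-category axioms for $\Mm^\rR$ by the standard Reedy induction on the degree function, with the one essential new ingredient that every construction is carried out \emph{equivariantly}, i.e.\ inside the categories $\Mm^{\Aut(r)}$ rather than in $\Mm$ itself. First I would record the inputs I am entitled to use. Since $\Mm$ is cofibrantly generated and $\Aut(r)$ is a group, each diagram category $\Mm^{\Aut(r)}$ carries the projective model structure, which is again cofibrantly generated and whose weak equivalences and fibrations are detected on the underlying objects of $\Mm$. In particular a Reedy weak equivalence is the same thing as a levelwise weak equivalence, so the two-out-of-three property and closure under retracts for weak equivalences are inherited immediately from $\Mm$; closure under retracts for Reedy cofibrations and Reedy fibrations follows because a retract of $f$ induces retracts on each relative latching map $X_r\cup_{L_r X}L_r Y\to Y_r$ and each relative matching map $X_r\to M_r X\times_{M_r Y}Y_r$.

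Next I would establish the equivariance lemma that underlies everything. The automorphism group $\Aut(r)$ acts on the indexing categories $\rR^+(r)$ and $\rR^-(r)$ by post- and pre-composition, so the latching object $L_r(X)=\colim_{\alpha\in\rR^+(r)}X_s$ and the matching object $M_r(X)=\lim_{\alpha\in\rR^-(r)}X_s$ acquire natural $\Aut(r)$-actions for which the structure maps $L_r X\to X_r\to M_r X$ are equivariant; consequently the relative latching and matching maps genuinely live in $\Mm^{\Aut(r)}$. This is exactly where the generalized Reedy axioms enter: conditions (2) and (4)--(5) of Definition~\ref{reedy definition} guarantee that these indexing categories are $\Aut(r)$-stable and that the resulting (co)limits carry well-defined actions compatible with the structure maps. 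I would also prove the usual bookkeeping lemma characterising the three classes by their relative maps: $f$ is a Reedy cofibration and a weak equivalence precisely when each relative latching map is a trivial cofibration in $\Mm^{\Aut(r)}$ (and dually, a Reedy fibration and a weak equivalence precisely when each relative matching map is a trivial fibration). This is proved by induction, the key point being that the induced map $L_r X\to L_r Y$ on latching objects is itself a trivial cofibration, so that its cobase change along $L_r X\to X_r$ is a weak equivalence and two-out-of-three applies.

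The heart of the argument is the two factorizations, produced by induction on degree. Given $f\colon X\to Y$, suppose a factorization $X\to Z\to Y$ has been built over all objects of degree $<n$; this already pins down $L_r Z$ and $M_r Z$ for every $r$ of degree $n$. For each such $r$ I would factor, inside $\Mm^{\Aut(r)}$, the canonical comparison map
\[
X_r\cup_{L_r X}L_r Z\longrightarrow M_r Z\times_{M_r Y}Y_r
\]
as a cofibration followed by a trivial fibration (to obtain the (cofibration, trivial fibration) factorization) or as a trivial cofibration followed by a fibration (to obtain the other), taking $Z_r$ to be the intermediate object. By the bookkeeping lemma the assembled map $X\to Z$ is then a Reedy cofibration and $Z\to Y$ a Reedy trivial fibration (respectively a Reedy trivial cofibration and a Reedy fibration). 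The lifting axioms run on the same scheme: in a lifting problem with a Reedy trivial cofibration on the left and a Reedy fibration on the right, at each degree $r$ one faces a lifting problem in $\Mm^{\Aut(r)}$ whose left map $A_r\cup_{L_r A}L_r B\to B_r$ is a trivial cofibration and whose right map $X_r\to M_r X\times_{M_r Y}Y_r$ is a fibration; a solution exists there, and the solutions assemble into a natural transformation.

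The main obstacle is the equivariance threaded through every step: in the classical Reedy setting all factorizations and lifts are taken in $\Mm$, whereas here they must be performed in $\Mm^{\Aut(r)}$ \emph{and} be natural in $r$. Concretely, the two delicate points are (i) checking that the latching and matching (co)limits are computed over $\Aut(r)$-stable indexing categories, so that they carry group actions for which the structure maps are equivariant---this is precisely what the rigidity axioms (4)--(5) provide, and it is what fails for a category that is merely Reedy-like without these conditions---and (ii) verifying that the equivariant factorizations and lifts chosen degree by degree glue into honest functors, i.e.\ that the choices at degree $n$ are compatible with the maps $Z_s\to Z_{s'}$ already fixed at lower degrees. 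Once these compatibilities are secured, the outstanding model-category axioms are formal consequences of the two weak factorization systems just constructed.
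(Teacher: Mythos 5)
The paper does not actually prove this theorem: it is quoted from Berger--Moerdijk \cite[Theorem 1.6]{bm_reedy}, so the only meaningful comparison is with the proof in that cited source. Your outline follows its architecture faithfully --- levelwise two-out-of-three and retract arguments, $\Aut(r)$-equivariant latching and matching objects, degreewise equivariant factorization of the comparison map $X_r\cup_{L_rX}L_rZ\to M_rZ\times_{M_rY}Y_r$, and degreewise lifting in the projective model structures on $\Mm^{\Aut(r)}$.

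There is, however, one genuine error, and it sits exactly at the crux of what distinguishes the generalized from the classical Reedy theorem: you misidentify what axioms (4)--(5) of Definition \ref{reedy definition} are for. The $\Aut(r)$-actions on $\rR^+(r)$, $\rR^-(r)$, $L_r(X)$ and $M_r(X)$, and the equivariance of the structure maps, need only axiom (2): since $\Iso(\rR)=\rR^+\cap\rR^-$, pre- and post-composition by automorphisms preserves these slice categories. No rigidity is required for the actions to exist. What axiom (4) really provides is that the matching categories $\rR^-(r)$ are \emph{rigid}: an automorphism of an object $(f\colon r\to s)$ of $\rR^-(r)$ is an isomorphism $\theta$ with $\theta f=f$, hence the identity. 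Without this, $M_r(X)$ would contain fixed-point-type limits $X_s^{G}$ over nontrivial isotropy groups, and these are not homotopically controlled in the projective structure (the fixed-point functor $\Mm^{G}\to\Mm$ is not right Quillen, since the trivial-action functor does not preserve projective cofibrations); consequently the second factor in your degreewise factorizations would fail to be a levelwise fibration or weak equivalence, and the induction would not close. Note also the asymmetry your sketch misses: no latching-side rigidity is needed at all, because colimits over isotropy are coinvariants and $(-)_{G}\colon\Mm^{G}\to\Mm$ \emph{is} left Quillen for the projective structure --- this is exactly why the relative latching maps being projective cofibrations in $\Mm^{\Aut(s)}$ at lower degrees is the right inductive hypothesis to make $L_rX\to L_rY$ a (trivial) cofibration, the step you label ``bookkeeping.'' For the same reason, Berger--Moerdijk's theorem requires only axioms (1)--(4); axiom (5) plays no role here and serves solely to make $\rR^{\op}$ again generalized Reedy (cf.\ the Remark following Definition \ref{reedy definition}), whereas your proposal treats (4) and (5) as a symmetric pair jointly responsible for equivariance.
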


\subsection{The graphical category is generalized Reedy} 

\begin{theorem}\cite[6.4]{hrybook}
The graphical category $\varGamma$ is a dualizable generalized Reedy category.
\end{theorem}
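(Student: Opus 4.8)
The plan is to equip $\varGamma$ with the degree function $d(G) = |\Vt(G)|$ counting the vertices of the underlying graph, to take $\varGamma^+$ to be the subcategory generated by isomorphisms together with the inner and outer coface maps, and to take $\varGamma^-$ to be the subcategory generated by isomorphisms together with the codegeneracies. That these classes of generators exhaust all of $\varGamma$ is exactly the generation statement recorded in the previous section, so each axiom may be verified on generators and then propagated along composites. Throughout I would lean on the rigidity fact established earlier: since every morphism of $\varGamma$ has image a subgraph of its target, each such morphism is uniquely determined by its effect on edges.

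First I would dispatch axiom (1). An inner coface $d^{uv}\colon G \to G(P)$ and an outer coface $d^{v}\colon G \to P(G)$ each arise by substituting a two-vertex partially grafted corolla in a way that replaces a single vertex, so $|\Vt|$ increases by exactly one; dually a codegeneracy $\sigma^{v}\colon H \to H(\downarrow)$ deletes a bivalent vertex and decreases $|\Vt|$ by one, while isomorphisms of graphs preserve the vertex set. Hence non-invertible maps in $\varGamma^+$ strictly raise degree and non-invertible maps in $\varGamma^-$ strictly lower it. Axioms (2), (4), and (5) then follow from edge-rigidity together with (1). For (2), a morphism lying in both $\varGamma^+$ and $\varGamma^-$ neither raises nor lowers degree, so it is a degree-preserving composite of generators and must be a composite of isomorphisms. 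For (4), if $\theta f = f$ with $f \in \varGamma^-$ and $\theta \in \Iso(\varGamma)$, then on edges $\theta_{\Ed}\circ f_{\Ed} = f_{\Ed}$; since codegeneracy composites are surjective on edges this forces $\theta_{\Ed} = \id$, whence $\theta = \id$ by rigidity. Axiom (5) is the dual statement for $f \in \varGamma^+$, using that coface composites are injective on edges.

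The crux is axiom (3). Given $f\colon \varGamma(H) \to \varGamma(G)$, let $S \subseteq \Vt(H)$ be the set of vertices $v$ with $f_1(v) = \downarrow$, the vertices sent to the exceptional edge; these are necessarily bivalent, so the iterated codegeneracy $h\colon H \to B$, where $B = H\{\downarrow_v\}_{v\in S}$, is a well-defined morphism of $\varGamma^-$. I would then show that the induced map $g\colon B \to G$ with $f = gh$ lies in $\varGamma^+$: after collapsing the degenerate vertices, every remaining vertex of $B$ is decorated by a nonempty piece of $G$, so $g$ is injective on vertices and realizes $B$ as an isomorph of a subgraph of $G$ via iterated substitution into partially grafted corollas, that is, as a composite of coface maps. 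For uniqueness, suppose $f = g'h'$ is a second such factorization through some $B'$. Both $B$ and $B'$ have vertex set the non-collapsed vertices of $H$, and by rigidity both $h,h'$ and $g,g'$ are pinned down by their edge functions; comparing these edge functions produces the isomorphism $B \cong B'$ intertwining the two factorizations.

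I expect the genuine difficulty to sit inside axiom (3), specifically in verifying that the second factor $g$ really is a composite of coface maps rather than some more exotic ``image-a-subgraph'' morphism. This is precisely where the pathology of Example~\ref{bad_image} must be excluded: one must check that, once the degenerate vertices have been removed, no remaining vertex of $H$ can be sent to a multi-vertex decorated graph without the total image failing to be a subgraph of $G$, so that $g$ is forced to be vertex-injective. The uniqueness clause, by contrast, should fall out cleanly from edge-rigidity once the existence of the factorization is secured.
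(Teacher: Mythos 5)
Your proposal is correct and takes essentially the same route as the paper: the same degree function $d(G)=\left|\Vt(G)\right|$, the same subcategories $\varGamma^{\pm}$ (the paper defines them intrinsically by edge-injectivity/surjectivity plus a corolla condition and records your generators-and-isomorphisms description as a separate lemma), and the identical factorization for axiom (3) — first collapse the vertices with $f_1(v)=\,\downarrow$ by codegeneracies, then relabel along $f_0$ and substitute the remaining nontrivial decorations to obtain the positive part. The paper likewise only sketches existence of this factorization, so your treatment of the crux (that the second factor is genuinely a composite of cofaces) is at a comparable level of detail to the source.
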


The degree function $d:\Ob(\varGamma)\rightarrow \mathbb{N}$ is defined as $d(G) = \left|\Vt(G)\right|$. The positive maps are then those morphisms in $\varGamma$ which are injective on edge sets.  The negative maps are those $H \to G$ which are surjective on edge sets and which, for every vertex $v \in \Vt(G)$, there is a vertex $\tilde{v}\in\Vt(H)$ so that $f_1(\tilde{v})$
is a corolla containing $v$. An alternate, more illuminating, description is given by the following proposition.

\begin{lemma}\cite[6.65]{hrybook}
\begin{itemize} 
\item A map $f:H\to G$ is in $\varGamma^+$ if we can write it as a composition of isomorphisms and coface maps. 
\item A map $f:H\to G$ is in $\varGamma^-$ if we can write it as a composition of isomorphisms and codegeneracy maps.
\end{itemize} 
\end{lemma}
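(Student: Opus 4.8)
The plan is to read the displayed degree-theoretic descriptions of $\varGamma^+$ and $\varGamma^-$ as biconditionals and to prove each by two inclusions. The generating inclusions are the routine ones, so I would dispatch them first. I would check directly that every coface map is injective on edge sets (both kinds only adjoin edges and never identify existing ones), that every codegeneracy $\sigma^v$ is surjective on edge sets and satisfies the corolla condition (each target vertex $w$ is the image of the source vertex $w$, whose $f_1(w)$ is precisely the corolla on $w$), and that isomorphisms are edge-bijective and hence lie in both $\varGamma^+$ and $\varGamma^-$. Since injectivity on edges, surjectivity on edges, and the corolla condition are each preserved under composition, any composite of isomorphisms and cofaces lands in $\varGamma^+$ and any composite of isomorphisms and codegeneracies lands in $\varGamma^-$.

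For the converse on the positive side, I would take $f:\varGamma(H)\to\varGamma(G)$ injective on edges. By Reedy axiom (1) a degree-preserving positive map is invertible, so if $d(H)=d(G)$ then $f$ is an isomorphism; assume $d(H)<d(G)$. I would factor $f$ through its image in two stages. Recall that $\im f=f_0 H\{f_1(v)\}_{v\in\Vt(H)}$ is a $\hat G$-decorated graph which, by the definition of $\varGamma$, is a subgraph of $G$. The first stage is the map $H\to\im f$ realizing the decorations, replacing each vertex $v$ of $H$ by the decorated graph $f_1(v)$; since every $\hat G$-decorated graph is built iteratively from partially grafted corollas (the lemma recorded above) and substituting a partially grafted corolla is exactly an inner coface, this stage is a composite of inner coface maps and isomorphisms. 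The second stage is the subgraph inclusion $\im f\hookrightarrow G$, which as in the subgraph/subtree-inclusion discussion is a composite of outer coface maps and isomorphisms, obtained by adjoining the missing vertices of $G$ one at a time by substitution into the appropriate partially grafted corolla. Composing the two stages exhibits $f$ as a composite of cofaces and isomorphisms.

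For the converse on the negative side, I would take $f:\varGamma(H)\to\varGamma(G)$ surjective on edges and satisfying the corolla condition, and induct on $d(H)-d(G)\ge 0$, the base case again being Reedy axiom (1). When $d(H)>d(G)$ the image $H\{f_1(v)\}_{v}$ equals $G$, so $\sum_{v\in\Vt(H)}|\Vt(f_1(v))|=|\Vt(G)|<|\Vt(H)|$; since this is a sum of $|\Vt(H)|$ nonnegative integers that is smaller than $|\Vt(H)|$, some $v\in\Vt(H)$ has $f_1(v)$ a vertexless graph, i.e. a single edge. Compatibility of $f_1$ with biprofiles forces such a $v$ to have one input and one output, hence to be bivalent, so $f$ factors as $H\xrightarrow{\sigma^v}H(\downarrow)\xrightarrow{\bar f}G$ through the codegeneracy collapsing $v$. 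I would then verify that $\bar f$ is again edge-surjective and still satisfies the corolla condition, the collapse only merging the two edges at $v$ and deleting $v$ while leaving every other vertex's image untouched, and that $d(H(\downarrow))=d(H)-1\ge d(G)$, so the inductive hypothesis applies and writes $\bar f$, hence $f$, as a composite of codegeneracies and isomorphisms.

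The main obstacle is the genuinely combinatorial core of the two converses, which is where the properadic graph-substitution calculus rather than formal Reedy bookkeeping does the work. On the positive side one must be sure that the subgraph inclusion $\im f\hookrightarrow G$ really can be peeled off one vertex at a time by an honest outer coface, i.e. that an addable boundary vertex always exists and that adjoining it is modeled by substitution into a partially grafted corolla; acyclicity and connectivity of the graphs are what guarantee this. On the negative side the delicate point is confirming that splitting off the codegeneracy $\sigma^v$ preserves membership in $\varGamma^-$. Everything else reduces to the already-stated decomposition lemmas for $\hat G$-decorated graphs and a direct appeal to Reedy axiom (1).
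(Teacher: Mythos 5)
Your proposal is correct and follows essentially the same route as the paper and the book it cites: the easy inclusions by checking the generators, the positive converse by factoring through the image (inner cofaces realizing the decorations $f_1(v)$, then outer cofaces for the subgraph inclusion $\im f \hookrightarrow G$), and the negative converse by splitting off codegeneracies at vertices with $f_1(v) = \,\downarrow$, which is precisely the mechanism the paper itself uses in its sketch of the Reedy factorization. The only step you leave unstated is that edge-injectivity forces every $f_1(v)$ to have at least one vertex --- if $f_1(v) = \,\downarrow$ then $f_0$ would identify the input and output edges of $v$, which are distinct by acyclicity --- and this is exactly what licenses your claim that the first stage is a composite of inner cofaces rather than involving codegeneracies.
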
 

The proof of this lemma isn't entirely trivial, but the general idea is that codegeneracy maps decrease degree and satisfy the extra condition; coface maps increase degree and are injective on edges. 

We will not fully prove here that $\varGamma$ is Reedy. However, we can show where the decompositions in the third axiom of definition \ref{reedy definition} come from.

\begin{proposition}\cite[6.68]{hrybook} Every map in $f\in\varGamma$ factors as $f = g\circ h$,
where $h\in\varGamma^{-}$ and $g \in\varGamma^{+}$ and this factorization is unique up to isomorphism.
\end{proposition}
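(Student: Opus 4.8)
The plan is to produce the factorization by separating out, as the negative part, exactly those vertices of $H$ that $f$ collapses to an edge. Concretely, given $f\colon \varGamma(H)\to\varGamma(G)$ with data $(f_0,f_1)$, let
$$S=\{v\in\Vt(H): f_1(v)=\,\downarrow\}.$$
Each such $v$ is a $(1,1)$-vertex (a codegeneracy can only be applied at a bivalent vertex), so I can form $K:=H\{(\downarrow)_v\}_{v\in S}$ by substituting the edge into every vertex of $S$ simultaneously. The induced map $h\colon\varGamma(H)\to\varGamma(K)$ is a composite of codegeneracies $\sigma^v$, hence lies in $\varGamma^-$; on edges it is the quotient identifying the input and output edge of each collapsed vertex, and each surviving vertex $w\in\Vt(H)\setminus S=\Vt(K)$ is sent to a corolla containing $w$, so the defining condition of $\varGamma^-$ holds. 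I then define $g\colon\varGamma(K)\to\varGamma(G)$ on edges by the map that $f_0$ induces on the quotient $\Ed(K)$, and on vertices by $g_1(w)=f_1(w)$. By construction $g\circ h=f$, and because collapsing a vertex decorated by $\downarrow$ and then re-including it changes nothing, $\im g=\im f$, which is a subgraph of $G$; thus $g$ is a legitimate morphism of $\varGamma$.

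It remains to see that $g$ is positive, i.e.\ injective on edge sets, and this is the step I expect to be the crux. The point is that every coincidence $f_0(e)=f_0(e')$ with $e\neq e'$ must already be accounted for by the collapses recorded in $S$. To prove this I would argue through the image: since $\im f=f_0H\{f_1(v)\}_{v}$ is by hypothesis a genuine subgraph of $G$, the underlying graph substitution embeds into $G$, so two distinct edges of $H$ whose endpoints are \emph{not} collapsed survive as distinct edges of the image and therefore receive distinct colors under $f_0$. Hence the equivalence relation $\ker(f_0)$ on $\Ed(H)$ is generated precisely by the elementary identifications ``input edge $\sim$ output edge'' at the vertices of $S$, and after quotienting by exactly these the residual map $g_0$ is injective. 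This is the one place where the restriction built into the definition of $\varGamma$ (that images be subgraphs, cf.\ Example~\ref{bad_image}) is essential: without it, $f_0$ could identify edges for reasons not visible as bivalent collapses, and no such clean factorization would exist.

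For uniqueness up to isomorphism, suppose $f=g\circ h=g'\circ h'$ with $h,h'\in\varGamma^-$ and $g,g'\in\varGamma^+$. Since codegeneracies are surjective on edges and cofaces are injective on edges, $h_0$ and $h'_0$ must both realize the quotient $\Ed(H)\twoheadrightarrow\Ed(H)/\ker(f_0)$ and $g_0,g'_0$ the induced injections into $\Ed(G)$; this yields a bijection $\psi_0\colon\Ed(K)\to\Ed(K')$ with $h'_0=\psi_0 h_0$ and $g_0=g'_0\psi_0$. Moreover a positive map never collapses a vertex, so in either factorization the set of collapsed vertices is forced to be exactly $S$; consequently $\Vt(K)=\Vt(H)\setminus S=\Vt(K')$, and $\psi_0$ extends to a graph isomorphism $\psi\colon\varGamma(K)\xrightarrow{\cong}\varGamma(K')$ intertwining the two factorizations. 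This $\psi$ is the required isomorphism, completing the argument. The main obstacle throughout is the injectivity claim of the second paragraph; everything else is bookkeeping with the combinatorics of graph substitution established in \cite{hrybook}.
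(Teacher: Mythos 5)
Your proposal is correct and follows essentially the same route as the paper's proof sketch: both isolate the set of vertices sent to $\downarrow$, collapse them by codegeneracies to produce the negative part, and take the induced map (which the paper factors through $\im(f)$, and which you show is injective on edges via the subgraph condition) as the positive part. You additionally spell out the edge-injectivity crux and the uniqueness argument, both of which the paper's ``sketch of existence'' leaves implicit, but these fill in the same construction rather than constituting a different approach.
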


\begin{proof}[sketch of existence]
Given a morphism $f\colon G\to K$ in $\varGamma$ we know that for all $v\in\Vt(G)$, $f_1(v)$ is a subgraph of $K$. 

Let us consider  $T\subset \Vt(G)$, the subset of vertices of $G$ such that $f_1(v)=\downarrow$. We can define a graph $G_1=G\{\downarrow_{w}\}_{w\in\Vt(G)}$ which is the graph obtained by substitution of an edge into each $w\in T$ and a corolla substituted into each additional vertex. There is then a map $G\rightarrow G_1$ which is a composition of codegeneracy maps, one for each $w\in T$.  Next, define a a subgraph $G_2$ of $K$ as $G_2=f_0(G_1)$. In other words, $G_2$ is the subgraph obtained by applying $f_0$ to the edges of $G_1$, which makes sense because for each $w\in T$ the incoming edge and outgoing edge of $w$ will have the same image under $f_0$. There is an isomorphism $G_1\rightarrow G_2$ which is just the changing the names of edges via the assignment given by $f_0$. The vertices of $G_2$ are in bijection with the set $\Vt(G_1)\setminus T$. 

It is now the case that the image of $f$, $\im(f) = G_2\{f_1(u)\}_{u\in\Vt(G)\setminus T}$ where each $f_1(u)$ has at least one vertex.  Summarizing, (ignoring coloring) there exists a factorization:
\[
\begin{tikzcd}
G\arrow{rr}{f}\arrow{drr}\dar&&K\\
G_1\rar
&G_2\rar&
\im(f)\uar
.\end{tikzcd}
\]
This shows the existence of the decomposition.
\end{proof}

\begin{example} Let us turn to an example of how we generate $G_1$ for the example of $f$ below.

\[
\begin{tikzpicture} [scale=.6]
\node[empty](k) at (-1,4){};
\node[empty](l) at (0,4){};
\node[empty](j) at (1,4){}; 
\node[empty](p) at (1,0){}; 
\node[empty](q) at (2,0){}; 
\node[shape=circle,draw=white] (G) at (-2,0) {$G=$};
\node[shape=circle,draw=black] (v) at (0,0) {v};
\node[shape=circle,draw=black] (u) at (0,2) {u};
\node[shape=circle, draw=black](x) at (1,-2) {x};

\node[empty] (a) at (4,0){}; 
\node[empty] (b) at (6,0){}; 
\node[empty](m) at (0,-4){};
\node[empty](m') at (2,-4){};

\draw[->](a) to (b);
\draw[-] (k) to (u);
\draw[-] (l) to (u);
\draw[-] (j) to (u);
\draw[-] (u) to (v);
\draw[-] (v) to (x);
\draw[-](x) to (m);
\draw[-](x) to (m');
\draw[-](p) to (x);
\draw[-](q) to (x);

\node[empty](k) at (7,2){};
\node[empty](l) at (8,2){};
\node[empty](j) at (9,2){}; 
\node[empty](p) at (10,2){}; 
\node[empty](p') at (12,2){}; 
\node[shape=circle,draw=white] (G) at (13,0) {$=K$};
\node[shape=circle,draw=black] (w) at (8,0) {w};
\node[shape=circle, draw=black](z) at (9,-2) {z};
\node[shape=circle, draw=black](t) at (11,0) {t};

\node[empty](m) at (8,-4){};
\node[empty](m') at (10,-4){};

\draw[-] (k) to (w);
\draw[-] (l) to (w);
\draw[-] (j) to (w);

\draw[-] (w) to node [midway, fill=white] {1} (z);
\draw[-](z) to (m);
\draw[-](z) to (m');
\draw[-](t) to  [bend right=20] (z);
\draw[-](t) to  [bend left=20] (z);
\draw[-](p) to (t);
\draw[-](p') to (t);

\end{tikzpicture}
\]
Notice that the vertex $v$ is the only vertex in $G$ which has exactly one input and one output, and is mapped by $f$ to the edge in $K$ we have labeled $1$. It follows then that $G_1=G(\downarrow_{v})$ and looks like 
\[
\begin{tikzpicture} [scale=.5]
\node[empty](k) at (-1,2){};
\node[empty](l) at (0,2){};
\node[empty](j) at (1,2){}; 
\node[empty](p) at (1,0){}; 
\node[empty](q) at (2,0){}; 
\node[shape=circle,draw=white] (G) at (-2,0) {$G_1=$};
\node[shape=circle,draw=black] (u) at (0,0) {u};
\node[shape=circle, draw=black](x) at (1,-2) {x};
\node[empty](m) at (0,-4){};
\node[empty](m') at (2,-4){};

\draw[-] (k) to (u);
\draw[-] (l) to (u);
\draw[-] (j) to (u);
\draw[-] (u) to (x);
\draw[-](x) to (m);
\draw[-](x) to (m');
\draw[-](p) to (x);
\draw[-](q) to (x);
\end{tikzpicture} 
\]
The subgraph $G_2$ is now a relabeling and $\im(f)=G_2(f_1(u),f_1(x))$ where $f_1(u)$ is a corolla and $f_1(x)$ is the appropriate partially grafted corolla. 

\end{example} 

\subsection{A Segal model structure for infinity-properads} 

In this section, we attempt to describe a model structure for infinity-properads.
In preparing these notes, we realized the model structure is more complicated than what we presented in the original lectures, for reasons we outline in remarks \ref{broken_remark} and \ref{no model structure}.

We begin with a description of the Segal condition for a graphical set $X\in \Set^{\varGamma^{op}}$.
For $G \in \varGamma$, there is a natural map
\begin{equation}\label{big segal map}
	X_G \to \prod_{v\in \Vt(G)} X_{C_v}
\end{equation}
by using all of the (iterated outer coface) maps $C_v \to G$.
Of course if there is an edge $e$ between two vertices $v$ and $w$, then the two composites $\downarrow_e\, \overset{i_e}\to C_v \to G$ and $\downarrow_e\, \overset{i_e}\to C_w \to G$ are equal, so \eqref{big segal map} factors through a subspace\footnote{This is not a condition when $X_\downarrow = *$ is a one-point set; in that case, $X_G^1$ is just the product from \eqref{big segal map}.}
\[
	X_G^1 = \lim\limits_{\substack{
C_v \leftarrow \, \downarrow_e \rightarrow C_w \\
\text{$e$ an internal} \\ \text{edge of $G$}
}	} \left(\begin{tikzcd}[column sep=tiny, row sep=tiny]
		X_{C_v} \arrow{dr} & & X_{C_w} \arrow{dl}\\
		& X_\downarrow
	\end{tikzcd}\right)
\]
consisting of those sequences $(x_v)$ so that $i_e^*(x_v) = i_e^*(x_w)$ whenever $e$ is an edge between $v$ and $w$.
The \emph{Segal map} is 
\[
	X_G \xrightarrow{\chi_G} X_G^1 \subseteq \prod_{v\in \Vt(G)} X_{C_v}.
\]
If $X = N(P)$ is the nerve of a properad $P$, then $\chi_G$ is an isomorphism \cite[Lemma 7.38]{hrybook}.
In fact, this property \emph{characterizes} those graphical sets which are isomorphic to the nerve of a properad \cite[Theorem 7.42]{hrybook}.

If we allow ourselves to work with graphical \emph{spaces} instead of just graphical sets, then we can replace the isomorphism condition on the Segal maps by a homotopy condition (this type of idea goes all the way back to Segal \cite{segal}).

\begin{definition}\label{segal condition}
	A graphical space $X\in \sSet^{\varGamma^{op}}$ is said to satisfy the \emph{Segal condition} if the Segal map 
\[
	X_G \xrightarrow{\chi_G} X_G^1 \subseteq \prod_{v\in \Vt(G)} X_{C_v}
\]
is a weak homotopy equivalence of simplicial sets between $X_G$ and $X_G^1$ for each graph $G$.
\end{definition}

As in the classical cases, the Segal condition is not categorically well-behaved.
To study the homotopy theory of graphical spaces satisfying the Segal condition, we will build a model structure which allows us to identify such graphical spaces (or, at least those which possess an additional fibrancy condition).

Since $\varGamma$ is a dualizable Reedy category~\cite[Theorem 6.70]{hrybook}, we know that  $\varGamma^{\op}$ is also generalized Reedy. 
Hence, by Berger and Moerdijk \cite[Theorem 1.6]{bm_reedy}, the diagram category $\sSet^{\varGamma^{\op}}$ admits a generalized Reedy model structure.

\begin{remark}\label{broken_remark}
During the lecture, we stated that we could modify this so that the diagram category $\sSet^{\varGamma^{\op}}_{disc}$ admits a Reedy-type model structure, where the subscript $disc$ means that $X_{\downarrow}$ is discrete as a simplicial set.
Indeed, there is such a model structure: the inclusion functor $\sSet^{\varGamma^{\op}}_{disc} \hookrightarrow \sSet^{\varGamma^{\op}}$ admits a left adjoint given by sending $X$ to the pushout of $\pi_0(\operatorname{sk}_0(X)) \leftarrow \operatorname{sk}_0(X) \rightarrow X$, where the skeleton is taken in the $\varGamma$ direction.
One can then lift the model structure from $\sSet^{\varGamma^{\op}}$ using \cite[11.3.2]{hirschhorn}.
Unfortunately, one of the generating cofibrations is not a monomorphism, hence this model structure on $\sSet^{\varGamma^{\op}}_{disc}$ is not cellular.\end{remark}

The following remark is essentially adapted from the end of  \cite[\S 3.12]{threemodels}.

\begin{remark}\label{no model structure}
There is no model structure on $\sSet^{\varGamma^{\op}}_{disc}$ where weak equivalences are levelwise and cofibrations are monomorphisms, as one can see by attempting to factor $\varGamma[\,\downarrow\,] \amalg \varGamma[\,\downarrow\,] \to \varGamma[\,\downarrow\,]$ as a cofibration followed by an acyclic fibration: \[
	\varGamma[\,\downarrow\,] \amalg \varGamma[\,\downarrow\,] \rightarrowtail X \overset\sim\twoheadrightarrow \varGamma[\,\downarrow\,].\]
Since $\varGamma[\,\downarrow\,]_\downarrow$ is a set of cardinality one, the object $X \in \sSet^{\varGamma^{\op}}_{disc}$ would satisfy $2 \leq |X_\downarrow| = 1$.
\end{remark}
 
\begin{definition}[Segal core inclusions] \cite[Definition 7.35]{hrybook}
Given a graph $G$ with at least one vertex let $C_v$ denote the corolla at each $v\in\Vt(G)$ and let $\varGamma[C_v]$ denote the representable graphical set on $C_v$. Define the \emph{Segal core} $\Sc[G]$ as the graphical subset
\[
\Sc[G] 
= \bigcup_{v \in \Vt(G)} \textrm{Im} 
\left(\xymatrix{\varGamma[C_v] \ar[r]^-{i_v} & \varGamma[G]}\right) \subseteq \varGamma[G]
\] 
where $i_{v}$ is an iterated outer coface map. 
Denote by $\xymatrix{\Sc[G] \ar[r]^-{c} & \varGamma[G]}$ the \emph{Segal core inclusion}.
\end{definition}

The reader should compare this definition with \cite[Definition 2.2]{cm-ds}. 
Notice how suggestive this is in light of definition \ref{segal condition}: the map $\chi_G$ is exactly $c^* : \map(\varGamma[G], -) \to \map(\Sc[G], -)$ when $X$ is fibrant.
As we saw above, we cannot guarantee the existence of a left Bousfield localization of the non-cellular category $\sSet_{disc}^{\varGamma^{op}}$ at the set of Segal core inclusions.
Despite that, we still expect that the following holds.

\begin{conjecture}\label{model structure existence}
There is a model structure on $\sSet_{disc}^{\varGamma^{op}}$ analogous to those given in \cite[8.13]{cm-ds} and \cite[5.1]{threemodels}.
\end{conjecture}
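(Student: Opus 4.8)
The plan is to follow the two-step localization blueprint of Cisinski--Moerdijk \cite[8.13]{cm-ds} and \cite[5.1]{threemodels}, adapted to the graphical setting. First I would take as a starting point the generalized Reedy model structure on $\sSet^{\varGamma^{\op}}$ furnished by \cite[Theorem 1.6]{bm_reedy} (valid since $\varGamma$ is dualizable generalized Reedy), then pass to the reflective subcategory $\sSet_{disc}^{\varGamma^{\op}}$ using the adjunction of Remark \ref{broken_remark}, whose reflector $L$ sends $X$ to the pushout of $\pi_0(\operatorname{sk}_0 X) \leftarrow \operatorname{sk}_0 X \to X$. Finally I would left Bousfield localize at the set of Segal core inclusions $\{\Sc[G]\to\varGamma[G]\}_{G\in\varGamma}$. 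The expected outcome is a model structure whose fibrant objects are the Reedy-fibrant graphical spaces satisfying the Segal condition of Definition \ref{segal condition}, precisely because localizing at $c\colon \Sc[G]\to\varGamma[G]$ forces $c^*\colon \map(\varGamma[G],X)\to\map(\Sc[G],X)$ to be a weak equivalence, and this map is exactly the Segal map $\chi_G$ for fibrant $X$.

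The technical core is to make the localization legitimate despite the failure of cellularity recorded in Remark \ref{broken_remark}. The key observation is that cellularity is only needed for Hirschhorn's localization machinery; the weaker hypothesis of being \emph{left proper and combinatorial} suffices for the existence of left Bousfield localizations, via J.\,Smith's recognition theorem. I would therefore verify that the Reedy structure on $\sSet^{\varGamma^{\op}}$ is combinatorial---it is a category of simplicial presheaves with a small generating set of (trivial) cofibrations---and left proper, since its cofibrations are normal monomorphisms and $\sSet$ is left proper. Passing to $\sSet_{disc}^{\varGamma^{\op}}$ via the lifted model structure of Remark \ref{broken_remark}, I would check that combinatoriality and left properness survive the transfer along $L$; granting this, the Segal-core localization exists by the combinatorial recognition theorem and sidesteps the cellularity obstruction entirely.

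The main obstacle is exactly the point flagged by Remarks \ref{broken_remark} and \ref{no model structure}: one must establish that the lifted disc model structure is both combinatorial \emph{and} left proper, these being the hypotheses that replace cellularity. Left properness is the delicate half, since the reflector forms a pushout along $\operatorname{sk}_0 X \to \pi_0(\operatorname{sk}_0 X)$ that can fail to preserve weak equivalences between cofibrant objects, and Remark \ref{no model structure} shows that the most naive fix---declaring all monomorphisms to be cofibrations---is impossible. I expect the resolution to require a careful description of the generating cofibrations adapted to the $\downarrow$-object, so that the non-monomorphism generating cofibration of Remark \ref{broken_remark} is nevertheless handled within a left proper, combinatorial framework, paralleling the treatment of the degree-zero objects in \cite{threemodels}.

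Finally, to make the result genuinely ``analogous to'' \cite[8.13]{cm-ds} and \cite[5.1]{threemodels}, I would impose a completeness (univalence) localization on top of the Segal localization and then prove a Quillen equivalence with a complete-Segal-type model of infinity-properads, mirroring the comparison theorems in those references. I expect the identification of the local (fibrant) objects with properads up to homotopy to rely on the characterization theorem \cite[Theorem 7.42]{hrybook}, and the idiosyncratic morphisms of $\varGamma$ exhibited in Example \ref{bad_image} to be the principal source of difficulty in controlling the completeness condition.
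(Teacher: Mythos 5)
You should first be clear that the paper does not prove this statement: it is stated as a conjecture, deliberately, and the two paragraphs preceding it explain exactly why the authors stop short of a proof. Remark \ref{broken_remark} shows that the lifted model structure on $\sSet_{disc}^{\varGamma^{\op}}$ exists but has a generating cofibration that is not a monomorphism, hence fails to be cellular, and the authors conclude that they ``cannot guarantee the existence of a left Bousfield localization'' at the Segal core inclusions; Remark \ref{no model structure} rules out the obvious repair. So there is no proof in the paper to compare yours against, and your attempt can only be judged on whether it supplies the missing mathematics.

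It does not, and the gap sits exactly where you flag it. Replacing Hirschhorn's cellularity hypothesis by Smith's recognition theorem requires the lifted structure on $\sSet_{disc}^{\varGamma^{\op}}$ to be combinatorial \emph{and left proper}, and your proposal only says you ``would check'' and ``expect'' these; left properness is the whole problem, not a routine verification. Concretely, the non-monomorphism generating cofibration of Remark \ref{broken_remark} is essentially the reflector applied to $\partial\Delta[1]\cdot\varGamma[\,\downarrow\,]\to\Delta[1]\cdot\varGamma[\,\downarrow\,]$, i.e.\ a collapse $\varGamma[\,\downarrow\,]\amalg\varGamma[\,\downarrow\,]\to\varGamma[\,\downarrow\,]$; cobase change along it identifies two colors of a graphical space, and such quotients are levelwise pushouts of simplicial sets along non-monomorphisms, so left properness of $\sSet$ gives no control, while Remark \ref{no model structure} forbids enlarging the cofibrations to all monomorphisms to fix this. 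Until left properness of the lifted structure is proved (or refuted), the Smith-theorem route does not start: without it one obtains at best a left \emph{semi}-model structure, which is weaker than what Conjecture \ref{model structure existence} and the cited results assert. It is also worth noting that the targets of the analogy are not themselves obtained by your two-step scheme: Bergner's \cite[5.1]{threemodels} and the Cisinski--Moerdijk construction \cite[8.13]{cm-ds} build the model structure on the discrete-color category directly, defining the localized (non-levelwise) weak equivalences from the outset and verifying the factorization axioms by hand, which is precisely how they evade the no-go phenomenon of Remark \ref{no model structure} (that remark only obstructs structures with \emph{levelwise} weak equivalences). A genuine proof of the conjecture would therefore more plausibly follow that direct construction than a localization of the lifted levelwise structure. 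Your final paragraph on a completeness localization and a Quillen equivalence with simplicial properads addresses the paper's second conjecture, not this one, and adds nothing toward the existence question.
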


In \cite{hry15}, D.\,Yau and the authors gave a model structure on the category $\operatorname{sProperad}$ of simplicially-enriched properads.
The properadic nerve functor that we discussed earlier extends to a functor
\[ N:\operatorname{sProperad}\rightarrow \sSet_{disc}^{\varGamma^{\op}}\]
since $N(P)_\downarrow$ is the \emph{set} of colors of the simplicially-enriched properad $P$.
One should compare the conjectural model structure on $\sSet_{disc}^{\varGamma^{op}}$ with the model structure on $\operatorname{sProperad}$.

\begin{conjecture} 
The properadic nerve functor from simplicial properads to graphical spaces, 
\[ N:\operatorname{sProperad}\rightarrow \sSet_{disc}^{\varGamma^{\op}}\] is the right adjoint in a Quillen equivalence.
\end{conjecture}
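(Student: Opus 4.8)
The plan is to follow the standard template for proving a nerve functor is a right Quillen equivalence, as in the operadic story of Cisinski--Moerdijk \cite{cm-ds,threemodels}, taking for granted the conjectural model structure of Conjecture~\ref{model structure existence} on the target. First I would produce the left adjoint $L$ to $N$ by left Kan extension of the functor $\varGamma \to \operatorname{sProperad}$, $G \mapsto \varGamma(G)$ (regarded as a discrete simplicial properad), along the Yoneda embedding; concretely $L$ is the coend $L(X) = \int^{G \in \varGamma} X_G \cdot \varGamma(G)$, where $X_G \cdot \varGamma(G)$ is the tensoring of the simplicial properad $\varGamma(G)$ by the simplicial set $X_G$. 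That $L \dashv N$ is then formal, since $N(P)_G = \operatorname{sProperad}(\varGamma(G), P)$ and $N$ preserves limits.

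The next step is to check that $(L, N)$ is a Quillen pair, i.e.\ that $N$ preserves fibrations and acyclic fibrations (equivalently that $L$ carries generating cofibrations and acyclic cofibrations to cofibrations and acyclic cofibrations). The essential input here is the compatibility of $N$ with the Segal maps $\chi_G$ of Definition~\ref{segal condition}: because $\chi_G$ is an isomorphism whenever $X = N(P)$, the nerve of any simplicially enriched properad automatically satisfies the Segal condition, so the nerve of a \emph{fibrant} simplicial properad should land among the fibrant objects of the localized structure. Verifying the fibration condition degree by degree reduces, via the generalized Reedy structure on $\varGamma^{\op}$, to a comparison between the simplicial mapping spaces of $\operatorname{sProperad}$ and the corolla-indexed products $\prod_{v} X_{C_v}$ that cut out $X_G^1$.

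The heart of the matter --- and the step I expect to be the main obstacle --- is upgrading the Quillen pair to a Quillen equivalence. Using the standard recognition criterion I would establish (a) that $N$ reflects weak equivalences between fibrant objects, and (b) that for every cofibrant graphical space $X$ the derived unit $X \to N(R L X)$ (with $R$ a fibrant replacement in $\operatorname{sProperad}$) is a weak equivalence. The geometric content underlying both is a \emph{rigidification} theorem: every fibrant, hence Segal, graphical space is levelwise weakly equivalent to the nerve of a simplicially enriched properad, so that the homotopy-coherent gluing data recorded by a Segal object can be strictified to an honest composition. This is the analogue of Bergner's comparison of Segal categories with simplicial categories and of Cisinski--Moerdijk's rigidification of dendroidal Segal spaces, and for properads it is genuinely harder: properadic graph substitution permits vertices with several inputs \emph{and} several outputs, and the graphs parametrizing $\varGamma$ need not be simply connected, so a single vertex of the target may be glued to itself through loops and higher genus. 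Controlling homotopy coherence over this many-to-many, non-free composition is exactly where I expect new technical machinery to be required.

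Two further complications must be dispatched en route. First, the non-cellularity flagged in Remark~\ref{broken_remark} --- one of the generating cofibrations for the $disc$ variant fails to be a monomorphism --- means the localization yielding the structure of Conjecture~\ref{model structure existence} cannot be produced by off-the-shelf left Bousfield localization, so the existence and characterization of the fibrant objects feeding into (a) and (b) must be arranged by hand or by an alternate construction. Second, the discreteness constraint $X_{\downarrow} = \operatorname{sk}_0(X)$ rigidifies the set of colors on both sides; this is precisely what makes $N$ land in $\sSet_{disc}^{\varGamma^{\op}}$, but it also means the $0$-truncated comparison of color sets should be treated separately, after which I would deduce the full equivalence by induction along the degree (and genus) filtration of $\varGamma$.
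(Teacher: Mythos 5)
The statement you have attempted is stated in the paper as a \emph{conjecture}: the authors give no proof, and the surrounding text makes clear why none is currently available. In particular, the model structure on $\sSet_{disc}^{\varGamma^{\op}}$ that your entire argument takes as input is itself only Conjecture~\ref{model structure existence}; the paper shows (Remark~\ref{broken_remark}, Remark~\ref{no model structure}) that the obvious routes to it fail, since the discretized presheaf category is not cellular and admits no model structure with levelwise weak equivalences and monomorphisms as cofibrations. So there is no paper proof to compare against, and your proposal must be judged solely on whether it settles the conjecture. It does not: it is a research plan in which every load-bearing step is deferred rather than carried out.

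Concretely, three gaps remain. First, the Quillen-pair verification cannot even be formulated, because the target model structure has no specified fibrations or cofibrations --- ``check that $N$ preserves fibrations'' is vacuous until Conjecture~\ref{model structure existence} is resolved, and as you yourself note, off-the-shelf left Bousfield localization is unavailable. Second, the rigidification theorem you invoke (every fibrant Segal graphical space is levelwise equivalent to the nerve of a simplicial properad) is precisely the mathematical content of the conjecture, not a tool available for proving it; appealing to the analogy with Bergner's and Cisinski--Moerdijk's comparisons \cite{cm-ds,threemodels} is not an argument, and you correctly observe that the analogy breaks exactly where it is needed, since graphs in $\varGamma$ need not be simply connected, so the inductive strictification schemes used in the dendroidal setting do not transfer. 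Third, the recognition criterion for a Quillen equivalence (derived unit an equivalence on cofibrant objects, $N$ reflecting weak equivalences between fibrant objects) is stated but never verified for a single nontrivial case. The only step you actually complete is the formal construction of the left adjoint $L$ as a coend, which is correct but routine (with the minor caveat that on $\sSet_{disc}^{\varGamma^{\op}}$ one must compose with the discretization reflection of Remark~\ref{broken_remark}). In short: your outline is consistent with how the authors presumably expect a proof to proceed, correctly identifies the obstacles, and cites the relevant comparison points \cite{hry15}, but it proves nothing; the statement remains open.
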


\subsection{A diagrammatic overview} We conclude with a diagram which was provided as a handout at our lectures.
It indicates some interconnectedness of many models of categories, operads, properads, and props.
% \newpage 
% \thispagestyle{empty}

\begin{center}
% The following is commented out SOLELY so this will work with arXiv
\includegraphics{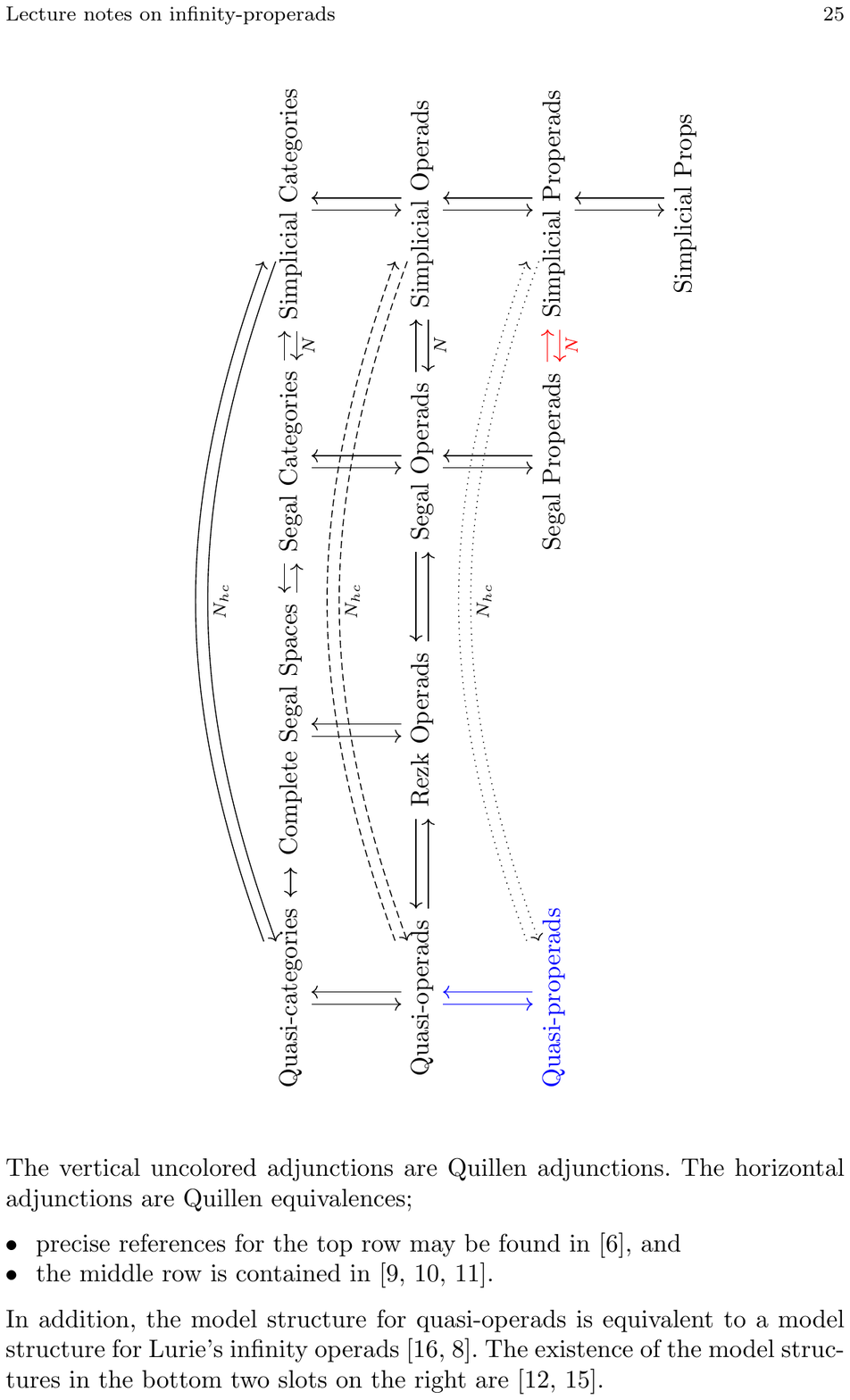}

% \begin{sideways}
% \begin{minipage}{15cm} \[
% \begin{tikzcd}[row sep=huge, column sep=small]
% \text{Quasi-categories} \arrow[rrr, shift left, bend left=20] \rar[leftrightarrow] \arrow[d, shift right] & \text{Complete Segal Spaces} \arrow[r, shift right] \arrow[d, shift right] & \text{Segal Categories}  \arrow[r, shift left] \arrow[l, shift right] \arrow[d, shift right] & \text{Simplicial Categories} \arrow[l, shift left, "N"] \arrow[lll, shift left, bend right=20, "N_{hc}"] \arrow[d, shift right] \\ 
% \text{Quasi-operads}   \arrow[rrr, shift left,dashed, bend left=20] \arrow[r, shift right] \arrow[d, shift right, color=blue] \arrow[u, shift right]& \text{Rezk Operads} \arrow[r, shift right] \arrow[l, shift right] \arrow[u, shift right] & \text{Segal Operads}  \arrow[r, shift left] \arrow[l, shift right] \arrow[d, shift right] \arrow[u, shift right] & \text{Simplicial Operads} \arrow[l, shift left, "N"] \arrow[lll, shift left, bend right=20,dashed, "N_{hc}"] \arrow[d, shift right] \arrow[u, shift right] \\ 
% \text{\color{blue} Quasi-properads} \arrow[u, shift right, color=blue] \arrow[rrr, shift left, dotted, bend left=20] &   & \text{Segal Properads}  \arrow[r, shift left, color=red] \arrow[u, shift right] & \text{Simplicial Properads} \arrow[l, shift left, "N", color=red] \arrow[lll, shift left, dotted, bend right=20, "N_{hc}"] \arrow[u, shift right]\arrow[d, shift right]  \\ 
% & & & \text{Simplicial Props} \arrow[u, shift right]
% \end{tikzcd} 
% \] \end{minipage}
% \end{sideways}
\end{center}

\noindent The vertical uncolored adjunctions are Quillen adjunctions. The horizontal adjunctions are Quillen equivalences;
\begin{itemize}
	\item precise references for the top row may be found in \cite{juliesurvey}, and
	\item the middle row is contained in \cite{cm-ho,cm-ds,cm-simpop}. 
\end{itemize}
In addition, the model structure for quasi-operads is equivalent to a model structure for Lurie's infinity operads \cite{1305.3658,1606.03826,1302.5756}.
The existence of the model structures in the bottom two slots on the right are \cite{hackneyrobertson2,hry15}.

\bibliographystyle{spmpsci}
\bibliography{talk1}

\end{document}